\numberwithin{equation}{section}
\theoremstyle{plain}
\newtheorem{theorem}{Theorem}[section]
\newtheorem{proposition}[theorem]{Proposition}
\newtheorem{lemma}[theorem]{Lemma}
\newtheorem{corollary}[theorem]{Corollary}
\newtheorem{definition}[theorem]{Definition}
\newtheorem{remark}[theorem]{Remark}
\newtheorem{convention}[theorem]{Convention}
\newenvironment{proof}{{\noindent \textbf{Proof}\,\,}}{\hspace*{\fill}$\Box$\medskip}
\title{On commuting billiards in higher-dimensional spaces of constant curvature}
\author{Alexey Glutsyuk
\thanks{ CNRS, France (UMR 5669 (UMPA, ENS de Lyon) and UMI 2615 (Interdisciplinary Scientific Center J.-V.Poncelet)), 
Lyon, France. 
E-mail:
aglutsyu@ens-lyon.fr}
\thanks{National Research University Higher School of Economics (HSE), Moscow, Russia}
 \thanks{Supported by part by RFBR grants  16-01-00748 and 16-01-00766}}
\begin{document}
\maketitle
\def\ii{\mathbb I}
\def\mca{\mathcal A}
  \def\diag{\operatorname{diag}}
 \def\la{\lambda}
\def\rr{\mathbb R}
\def\rp{\mathbb{RP}}
\def\dist{\operatorname{dist}}
\def\cc{\mathbb C}
\def\la{\lambda}
\def\hh{\mathbb H}
\def\hn{\hh_d}
\def\nn{\mathbb N}
 \def\mct{\mathcal T}
 \def\Int{\operatorname{Int}}
 \def\mcb{\mathcal B}
 \def\zz{\mathbb Z}
 \def\rp{\mathbb{RP}}
 \def\mcb{\mathcal B}
 \def\mcl{\mathcal L}
\def\deg{\operatorname{deg}}
\def\mod{\operatorname{mod}}
\def\ker{\operatorname{Ker}}
\def\mcd{\mathcal D}
\def\mcu{\mathcal U}
\def\mcv{\mathcal V}
\def\wt#1{\widetilde#1}
\def\La{\Lambda}

\begin{abstract}
We consider two nested   billiards in $\rr^d$, $d\geq3$, with $C^2$-smooth strictly 
convex boundaries. We prove that if the corresponding actions by reflections on the space of oriented lines commute, then the billiards are confocal ellipsoids. This together with 
the previous analogous result of the author in two dimensions solves completely the 
Commuting Billiard Conjecture due to Sergei Tabachnikov. The main result is deduced 
from the classical theorem due to Marcel Berger saying that in higher dimensions 
only quadrics may have caustics. We also prove versions of
 Berger's theorem and the main result  for billiards in spaces of constant curvature: 
 space forms. 
\end{abstract}
\tableofcontents

\section{Introduction}
\subsection{Main result}
Let $\Omega_a\Subset\Omega_b\subset\rr^d$ be two nested bounded 
domains with smooth strictly convex boundaries  $a=\partial\Omega_a$ and $b=\partial\Omega_b$.  Consider the corresponding 
billiard transformations $\sigma_a$, $\sigma_b$ acting  on the space of oriented lines in space by reflection as follows. Each $\sigma_g$, $g=a,b$, acts as identity on the lines disjoint from $g$. For each oriented line $l$ intersecting $g$  we take  its 
last intersection point $x$ with $g$ in the sense of orientation:  the orienting arrow of the line $l$ at $x$ 
is directed outside $\Omega_g$.  The image $\sigma_g(l)$ is the line obtained by reflection of the line $l$ from the hyperplane $T_xg$: the angle of incidence equals 
the angle of reflection. The line $\sigma_g(l)$ 
 is oriented by a tangent  vector at $x$ directed inside $\Omega_g$.  This is a continuous mapping that is smooth on the space of lines intersecting $g$ transversely. 
 
 \begin{remark} The above action can be defined for a convex billiard in any Riemannian manifold; the billiard reflection acts on the space of oriented geodesics.
 \end{remark}

Recall, see, e.g., \cite{berger, tab}, that a pencil of {\it confocal quadrics} in a Euclidean 
space $\rr^d$ is a one-dimensional family of quadrics defined in some orthogonal coordinates $(x_1,\dots,x_d)$ by equations 
$$\sum_{j=1}^d\frac{x_j^2}{a_j^2+\la}=1; \ \ \ a_j\in\rr \text{ are fixed; } \ \la\in\rr \text{ is the parameter}.$$

It is known that {\it any two confocal elliptic or ellipsoidal billiards}  commute 
\cite[p.59, corollary 4.6]{tab}, \cite[p.58]{tabcom}. Sergei Tabachnikov stated 
the conjecture affirming the converse: any two commuting nested convex 
billiards are confocal ellipses 
(ellipsoids) \cite[p.58]{tabcom}. In two dimensions his conjecture was proved 
by the author of the present paper in \cite[theorem 5.21, p.231]{anal} for piecewise $C^4$-smooth boundaries. 
 Here we prove it in higher dimensions in $\rr^d$ and in spaces of constant curvature: 
 space forms.

\begin{theorem} \label{tab} Let two  nested  strictly convex  $C^2$-smooth closed 
hypersurfaces in $\rr^d$, 
 $d\geq3$,  be such that the corresponding billiard transformations commute. Then they are  confocal  ellipsoids.
\end{theorem}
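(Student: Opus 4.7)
The plan is to reduce the theorem to the higher-dimensional Berger caustic theorem (announced in the abstract and established later in the paper) by proving that the commutation hypothesis forces the inner hypersurface $a$ to be a caustic of the billiard $(\Omega_b,\sigma_b)$. Berger's theorem asserts that in $\rr^d$, $d\geq3$, a strictly convex $C^2$ billiard possessing a smooth convex caustic must be an ellipsoid and the caustic a confocal quadric; combined with the fact that $a$ is a closed strictly convex hypersurface in $\Omega_b$, this forces $a$ and $b$ to be confocal ellipsoids.

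For the key step, let $\mcu$ denote the open set of oriented lines in $\rr^d$ disjoint from $\overline{\Omega_a}$. By definition $\sigma_a$ is the identity on $\mcu$, and strict $C^2$ convexity of $a$ ensures that $\partial \mcu$ is the smooth hypersurface of oriented lines tangent to $a$. Moreover the fixed set of $\sigma_a$ on the whole space of oriented lines equals $\overline{\mcu}$: any line meeting $a$ transversely is moved by the reflection across $T_xa$ at its last exit point $x$, with the only potential exception being normal lines, which are fixed as unoriented lines but have their orientation reversed (so they are not fixed as oriented lines). Hence for any $l\in\mcu$ the commutation relation
\[
\sigma_a(\sigma_b(l))\;=\;\sigma_b(\sigma_a(l))\;=\;\sigma_b(l)
\]
places $\sigma_b(l)$ in the fixed set of $\sigma_a$, i.e.\ in $\overline\mcu$. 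Running the same argument with $\sigma_b^{-1}$ (which commutes with $\sigma_a$ because $\sigma_a$ and $\sigma_b$ do) yields $\sigma_b(\overline\mcu)=\overline\mcu$. Since $\sigma_b$ is a homeomorphism of the space of oriented lines, its restriction to $\overline\mcu$ is a homeomorphism onto itself and therefore preserves both the interior $\mcu$ and the boundary $\partial \mcu$. Consequently $\sigma_b$ maps the family of oriented lines tangent to $a$ bijectively to itself: this is exactly the caustic property.

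Once $a$ is recognised as a $C^2$ convex caustic of the billiard $b$, Berger's theorem finishes the proof. The reduction step above is short and essentially formal, so the real difficulty — and the main obstacle in my view — lies in the classification result it invokes: proving the higher-dimensional Berger theorem in a form applicable to arbitrary $C^2$ caustics of arbitrary strictly convex hypersurfaces, and then extending it to the spherical and hyperbolic space forms promised in the abstract. A secondary, minor technical point is verifying that $\sigma_b$ is smooth enough along the tangent family $\partial \mcu$ to apply Berger in the stated regularity class; this follows from the transversality of every line tangent to $a$ with the outer boundary $b$, itself a consequence of the nesting $\overline{\Omega_a}\Subset\Omega_b$.
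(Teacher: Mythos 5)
Your proposal is correct and follows essentially the same route as the paper: the commutation relation forces $\sigma_b$ to preserve the closed set of oriented lines not crossing $a$, hence (since $\sigma_b$ is a homeomorphism) to preserve its boundary, the family of lines tangent to $a$ --- exactly the caustic property established in Proposition \ref{prc} --- after which Berger's theorem yields confocal quadrics, necessarily ellipsoids by closedness and convexity. The only cosmetic difference is that you obtain surjectivity of $\sigma_b$ on that closed set via the commuting inverse $\sigma_b^{-1}$, whereas the paper argues through the involutions $J$ and $J\circ\sigma_b$; both arguments are valid.
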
 

To extend Theorem \ref{tab} to spaces of constant curvature, let us recall the notions 
of space forms and (confocal) quadrics in them. 

\begin{definition} A {\it space form} is a complete connected Riemannian manifold 
of constant curvature.
\end{definition}

\begin{remark} We will deal only with simply connected space forms. 
It is well-known that they  
are the Euclidean space $\rr^d$, the unit sphere $S^d\subset\rr^{d+1}$ in the Euclidean space and the hyperbolic space $\hn$ (up to normalization of the metric 
 by constant scalar factor, which changes neither geodesics, nor reflections). 
 It is known that the hyperbolic space $\hn$ 
admits a standard model in the Minkovski space $\rr^{d+1}$. Finally, 
each space form $\Sigma$ we will be dealing with is 
 realized as an appropriate hypersurface in the space $\rr^{d+1}$ with coordinates 
 $x=(x_0,\dots,x_d)$ 
equipped with a suitable quadratic form
$$<Gx,x>, \  \ G \text{ is a symmetric } \  (d+1)\times(d+1)-
\text{matrix}:$$

Euclidean case: $G=\diag(0,1,\dots,1)$, $\Sigma=\rr^d=\{ x_0=1\}$.

Spherical case: $G=Id$, $\Sigma=S^d=\{<Gx,x>=1\}$, $<Gx,x>=\sum_jx_j^2$. 

Hyperbolic case: $G=\diag(-1,1,\dots,1)$, $\Sigma=\hn=\{<Gx,x>=-1\}\cap\{ x_0>0\}$. 

The metric on each hypersurface $\Sigma$ is the restriction to $T\Sigma$ of the 
quadratic form $<Gx,x>$ on the ambient space. It is well-known that the geodesics 
on $\Sigma$ are  its intersections with two-dimensional vector subspaces in $\rr^{d+1}$. 
Completely geodesic $k$-dimensional submanifolds in $\Sigma$ are its intersections 
with $(k+1)$-dimensional vector subspaces in $\rr^{d+1}$. 
\end{remark}

\begin{definition} \cite[p. 84]{veselov2} A {\it quadric} in $\Sigma$ is a hypersurface 
$$S=\Sigma\cap\{<Qx,x>=0\}, \ Q \text{ is a symmetric matrix}.$$
The  {\it pencil of confocal quadrics} associated to a symmetric matrix $Q$ 
 is the family of quadrics 
 $$S_\la=\Sigma\cap\{<Q_\la x,x>=0\}, \ Q_{\la}=(Q-\la G)^{-1}, \ \la\in\rr.$$
 \end{definition}
 
 \begin{definition}  \label{stconv} A germ of $C^2$-smooth 
 hypersurface $S$ in a space form $\Sigma$ at a point $p$ is {\it strictly convex}, if it has 
 quadratic tangency with its tangent completely geodesic hypersurface $\Gamma_p$: 
 there exists a constant $C>0$ such that for every $q\in S$ close to $p$ one has 
 $$\dist(q,\Gamma_p)>C||q-p||^2; \text{ here } ||q-p||=\dist(q,p).$$ 
 \end{definition}

\begin{theorem} \label{tab2} Let $d\geq3$, and let $\Sigma$ be a simply connected $d$-dimensional space form: either $\rr^d$, or the unit sphere, or the hyperbolic space. 
Let two  nested  strictly convex  $C^2$-smooth closed 
hypersurfaces in $\Sigma$  be such that the corresponding billiard transformations commute. Then they are confocal quadrics. 
\end{theorem}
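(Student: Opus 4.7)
The plan is to deduce Theorem~\ref{tab2} from a space-form version of Berger's theorem, which the abstract announces as a result of the paper, in exactly the same way as Theorem~\ref{tab} is deduced from the classical Berger theorem. The argument splits into two steps: first, translate the commutation hypothesis into the statement that the inner hypersurface $a$ is a caustic of the billiard on $b$; second, apply the space-form Berger theorem to conclude.

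For the first step, let $\text{Fix}(\sigma_a)$ denote the set of oriented geodesics of $\Sigma$ fixed by $\sigma_a$. Directly from the definition of $\sigma_a$, one sees that $\text{Fix}(\sigma_a)$ consists exactly of the oriented geodesics disjoint from $\overline{\Omega_a}$ (on which $\sigma_a$ is the identity) together with the oriented geodesics tangent to $a$ (on which $\sigma_a$ equals the identity by continuous extension from the disjoint side). Hence $\text{Fix}(\sigma_a)$ is the closure of the open set of geodesics disjoint from $\overline{\Omega_a}$ in the space of oriented geodesics of $\Sigma$, and its topological boundary there is precisely the set $\mathcal{T}_a$ of oriented geodesics tangent to $a$. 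Commutation of $\sigma_a$ and $\sigma_b$ gives
$$\sigma_a(\sigma_b(l)) = \sigma_b(\sigma_a(l)) = \sigma_b(l) \quad\text{for every } l\in\text{Fix}(\sigma_a),$$
so $\sigma_b$ preserves $\text{Fix}(\sigma_a)$. Because $\sigma_b$ is a homeomorphism of the space of oriented geodesics (it is a continuous involution, hence its own continuous inverse), its restriction to $\text{Fix}(\sigma_a)$ is a self-homeomorphism and therefore preserves the topological boundary. That boundary being $\mathcal{T}_a$, we conclude $\sigma_b(\mathcal{T}_a)=\mathcal{T}_a$; in other words, $a$ is a caustic of the billiard on $b$.

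With the caustic property in hand, the second step is immediate: by the space-form version of Berger's theorem established earlier in the paper, a strictly convex $C^2$-smooth closed billiard hypersurface in a $d$-dimensional simply connected space form with $d\ge3$ that admits a caustic must itself be a quadric, and every such caustic belongs to the associated confocal pencil $\{S_\la\}$ introduced above. Applied to the billiard on $b$ with caustic $a$, this yields that $b$ is a quadric in $\Sigma$ and that $a$ lies in the corresponding confocal pencil, so $a$ and $b$ are confocal quadrics, as asserted.

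The principal obstacle in this strategy is not the commutation-to-caustic reduction—which, as above, collapses to a clean topological observation about boundaries of fixed-point sets of involutions—but rather the space-form version of Berger's theorem itself. Adapting Berger's classical Euclidean argument to the spherical and hyperbolic settings, most naturally through the Minkowski-type ambient realization of $\Sigma$ and the symmetric-matrix definition of quadrics recalled in the paper, is expected to be the main technical task of the present theorem and is where the curvature of $\Sigma$ must be carefully accommodated.
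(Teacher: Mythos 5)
Your overall route is the same as the paper's: reduce commutation to the statement that $a$ is a caustic of $b$ via the fixed-point set of $\sigma_a$, then apply the space-form version of Berger's theorem (Theorem \ref{berg}, which is proved earlier in the paper and which you are entitled to quote) at each point of $b$ and each tangent line to $a$. The second step is carried out exactly as in the paper, modulo the small remark that strict convexity gives sign-definite, hence non-degenerate, second fundamental forms, which is what Theorem \ref{berg} requires.

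There is, however, a genuine gap in your first step, at precisely the point where care is needed. You justify that $\sigma_b$ restricts to a self-homeomorphism of $\mathrm{Fix}(\sigma_a)$ by asserting that $\sigma_b$ ``is a continuous involution, hence its own continuous inverse.'' This is false: $\sigma_b$ reflects an oriented geodesic at its \emph{last} intersection point with $b$ and reorients it inward, so $\sigma_b^2$ is the second iterate of the billiard map, not the identity (the second reflection happens at the next exit point). The issue is not cosmetic. Commutation only gives you $\sigma_b(\mathrm{Fix}(\sigma_a))\subset\mathrm{Fix}(\sigma_a)$, and a homeomorphism of the ambient space mapping a closed set \emph{into} itself can perfectly well send boundary points of that set to interior points (consider $x\mapsto x/2$ on $[0,1]$). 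So the surjectivity of $\sigma_b$ onto $\mathrm{Fix}(\sigma_a)$ --- equivalently, the fact that no geodesic tangent to $a$ is sent by $\sigma_b$ to a geodesic strictly disjoint from $a$ --- is exactly the content of the caustic claim, and your argument establishes it only by appeal to an involutivity that does not hold. The paper repairs this by introducing the orientation-reversal map $J$ and writing $\sigma_b=J\circ(J\circ\sigma_b)$: both $J$ and $J\circ\sigma_b$ \emph{are} involutions, each maps $\overline{\Pi}_a=\mathrm{Fix}(\sigma_a)$ into itself, and an involution mapping a set into itself necessarily maps it onto itself; hence $\sigma_b(\overline{\Pi}_a)=\overline{\Pi}_a$ and the boundary (the tangent geodesics) is preserved. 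With this substitution your proof goes through.
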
 

 Theorem \ref{tab} follows from Theorem \ref{tab2}.
 
Theorem \ref{tab} can be deduced from a classical theorem due to Marcel Berger 
\cite{berger} 
concerning billiards in $\rr^d$, $d\geq3$, which states that only billiards bounded by 
quadrics may have caustics (see Definition \ref{defca} 
for the notion of caustic), and the caustics are their confocal quadrics.  
To prove Theorem \ref{tab2} in full generality, we extend Berger's theorem to the case of  billiards in space forms (Theorem \ref{berg} stated in Subsection 1.3 and proved in 
Section 2) and then deduce Theorem  \ref{tab2} 
in Section 3. A local version of Theorem \ref{tab2} will be proved in Section 4. 
In Section 5 we present some open problems. 

\subsection{Historical remarks}
Commuting billiards are closely related to problems of 
classification of integrable billiards, see \cite{tabcom}.  It is known that 
elliptic and ellipsoidal billiards are  integrable, see \cite[proposition 4]{veselov},  \cite[chapter 4]{tab}, and this also holds for non-Euclidean ellipsoids in sphere and 
in the Lobachensky (hyperbolic) space of any dimension, see \cite[the corollary on p. 95]{veselov2}. The famous Birkhoff Conjecture states that in two dimensions the converse is true. Namely, it deals with the so-called 
{\it Birkhoff caustic-integrable} convex planar billiards with smooth boundary, that is, 
billiards for which 
there exists a foliation by closed caustics in an interior neighborhood of 
the boundary. It states that the only Birkhoff caustic-integrable billiards are ellipses.  
Birkhoff Conjecture was first stated in print in Poritsky's paper \cite{poritsky}, who  proved it in loc. cit. 
under the additional assumption that the billiard in each closed caustic near the boundary has the same closed caustics, as the initial billiard. Poritsky's assumption implies that 
{\it the initial billiard map commutes with the billiard in any closed caustic;} this follows by 
the arguments presented in \cite[section 4, pp.58--59]{tab}. 
One of the most famous results 
on Birkhoff Conjecture is a theorem of M.Bialy \cite{bialy}, who proved that if the phase cylinder of the billiard map is  foliated (almost everywhere) by non-contractible closed curves which are invariant under the billiard map, then the boundary is a circle. 
In \cite{bialy1} he proved the same result for billiards on surfaces of non-zero 
constant curvature. 
 A local version of Birkhoff Conjecture,  for integrable deformations of ellipses was recently solved in 
\cite{kavila, kalsor}. Recent solution of 
its algebraic version (stated and partially studied in \cite{bolotin2}) is a result of papers \cite{bm, bm2, hess, gl}. 
For a historical survey of Birkhoff Conjecture 
see  \cite[section 5, p.95]{tab}, the recent surveys \cite{BM18, KS18}, the 
papers \cite{kalsor, hess} and references therein. 
Dynamics in  billiards  in two and higher 
dimensions with piecewise smooth boundaries consisting of confocal quadrics 
was studied in \cite{drag}.

\subsection{Berger's theorem and its extension to 
billiards in space forms} 

\begin{definition} \label{defca} 
Let $a$, $b$ be two nested strictly convex closed hypersurfaces 
in a Riemannian manifold $E$: the hypersurface $b$ bounds a relatively compact 
domain in $E$ whose interior  contains $a$. 
We say that $a$ is a {\it caustic} for the hypersurface $b$, if the image of each oriented 
geodesic tangent to $a$ by the reflection $\sigma_b$ from $b$ 
is again a geodesic tangent to $a$. 
\end{definition}

\begin{remark} It is well-known that if $a$, $b$  are two confocal ellipses 
(ellipsoids) in Euclidean space, then the smaller one is a caustic for the bigger one. 
In the plane this is the classical 
Proclus--Poncelet theorem. In higher dimensions this theorem is due to Jacobi, 
see \cite[p.80]{st}. Similar statement holds in any space form, see, e.g., 
\cite[theorem 3]{veselov2}.
\end{remark}

We will deduce Theorem \ref{tab2} from 
the following theorem, which implies that {\it in every space form 
only quadrics  have caustics, and 
the caustics of each quadric $S$ are exactly the quadrics confocal to $S$.}

\begin{theorem} \label{berg} Let $d\geq3$, and let $\Sigma$ be a $d$-dimensional 
simply connected space form. Let 
$S,U\subset\Sigma$ be germs of  $C^2$-smooth hypersurfaces at points 
$B$ and $A\neq B$  respectively  with non-degenerate 
second fundamental forms. Let the geodesic $AB$ be tangent to 
$U$ at $A$ and transversal to $S$ at $B$.  Let $C\in\Sigma\setminus\{ B\}$, and let 
a vector tangent to the geodesic $AB$ at $B$ be reflected from the hyperplane $T_BS$ 
to a tangent vector to the geodesic $BC$. 
Let there exist a germ of $C^2$-smooth hypersurface 
 $V$ at $C$  tangent to $BC$ at $C$ such that  each geodesic close to $AB$ and 
 tangent to $U$ be reflected from the hypersurface $S$ to a geodesic tangent to $V$. 
Then $S$ is a piece of a quadric $b$, and 
$U$, $V$ are pieces of one and the same quadric confocal to $b$. 
\end{theorem}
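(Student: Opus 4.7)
The plan is to adapt Berger's original Euclidean argument to the space form setting by working in the ambient $(d+1)$-dimensional model described in the preceding remark: $\Sigma$ sits inside $\rr^{d+1}$ equipped with the quadratic form $\langle Gx,x\rangle$, geodesics on $\Sigma$ are intersections of $\Sigma$ with $2$-dimensional vector subspaces, and completely geodesic hypersurfaces are intersections with hyperplanes through the origin. In this ambient picture, the billiard reflection at $B\in S$ of a tangent vector $v\in T_B\Sigma$ is given in ambient coordinates by $v'=v-2\frac{\langle Gv,n\rangle}{\langle Gn,n\rangle}n$, where $n$ is the $G$-conormal to $S$ inside $T_B\Sigma$; this is well defined because the restriction of $\langle G\cdot,\cdot\rangle$ to each tangent space $T_B\Sigma$ is definite. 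Consequently Berger's reflection calculus, which is polynomial in the Euclidean inner product, goes through verbatim once that inner product is replaced by $\langle G\cdot,\cdot\rangle$.

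First I would parametrize the $(d-1)$-parameter family of geodesics near $AB$ tangent to $U$ by the contact point $A'\in U$, and use the $C^2$-smoothness plus non-degeneracy of the second fundamental forms of $U$, $S$, $V$ together with the implicit function theorem (applied to the transversal intersection of geodesics with $S$ near $B$, and to the tangency condition on $V$ near $C$) to obtain $C^2$ maps $A'\mapsto B'(A')\in S$ and $A'\mapsto C'(A')\in V$. The tangency of the reflected geodesic to $V$ at $C'$ then yields a $(d-1)$-parameter family of scalar identities relating the $2$-jet of $S$ at $B$ to the $2$-jets of $U$ at $A$ and $V$ at $C$.

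Next I would carry out Berger's overdetermination argument in this ambient form. Differentiating the tangency identities in the parameter $A'$ once and twice, and combining with the reflection law, one obtains many more equations on the local jets of $S$ at $B$ than there are free coefficients in those jets, provided $d\geq 3$. The argument extracts from this overdetermination a PDE system on $S$ whose only solutions are sections $S=\Sigma\cap\{\langle Qx,x\rangle=0\}$ for a symmetric $(d+1)\times(d+1)$ matrix $Q$, i.e.\ quadrics in $\Sigma$ in the sense of the definition from the excerpt. Since every step of Berger's algebra is polynomial in the scalar product, substituting $\langle G\cdot,\cdot\rangle$ throughout preserves the derivation uniformly across the Euclidean ($G=\diag(0,1,\dots,1)$), spherical ($G=Id$) and hyperbolic ($G=\diag(-1,1,\dots,1)$) cases. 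With $S$ identified as a piece of a quadric $b=\Sigma\cap\{\langle Qx,x\rangle=0\}$, I would close the argument using the Proclus--Poncelet--Jacobi theorem for quadrics in $\Sigma$ cited after Definition~\ref{defca}: through $A$ and tangent to $AB$ there passes a unique confocal quadric $S_\la=\Sigma\cap\{\langle (Q-\la G)^{-1}x,x\rangle=0\}$, which by the reflection property together with the matching of tangent direction and second fundamental form (exploiting the non-degeneracy of the latter) coincides with $U$ near $A$; the same confocal quadric then coincides with $V$ near $C$ because both are caustics of $b$ seen from the reflected geodesic, and tangent $2$-jets at $C$ determine the germ.

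The main obstacle will be Step~2: Berger's original overdetermination argument in $\rr^d$ is already delicate, and pushing it through in three distinct signature types of $G$ requires a careful audit of every Euclidean identity Berger uses, verifying that each survives the replacement of the Euclidean inner product by $\langle G\cdot,\cdot\rangle$. This is possible because all the identities are polynomial in $G$ and rely only on the non-degeneracy of $G$ on the tangent spaces to $\Sigma$ that actually appear — but the bookkeeping, and in particular the verification that the degenerate Euclidean form does not cause a coefficient to vanish inadvertently, is where essentially all the work lies. The dimension hypothesis $d\geq 3$ is indispensable for the equation count to swing in the right direction, exactly as in Berger's original proof.
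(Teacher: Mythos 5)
There is a genuine gap at the heart of your plan: Step~2, which you yourself identify as ``where essentially all the work lies,'' is asserted rather than proved. The claim that Berger's overdetermination argument ``goes through verbatim once the Euclidean inner product is replaced by $\langle G\cdot,\cdot\rangle$'' is precisely the content that needs establishing, and it is not a routine substitution: Berger's computation lives in affine $\rr^d$, where geodesics are straight lines, the circumscribed cones are honest quadratic cones, and the tangency/envelope calculus is affine-projective. In your ambient $(d+1)$-dimensional picture the geodesics of $\Sigma$ are curves on a curved hypersurface, and writing the reflection law as $v'=v-2\frac{\langle Gv,n\rangle}{\langle Gn,n\rangle}n$ in $T_B\Sigma$ does not by itself transfer the differentiated tangency identities; moreover in the Euclidean case $G$ is degenerate, which you flag but do not resolve. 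The paper's proof avoids redoing Berger's computation altogether by a different mechanism: it projects $\Sigma$ to $\rp^d$ by the tautological projection, so that geodesics become projective lines, pushes the metric forward to a metric $g$, and shows (Proposition~\ref{pmet}) that at each point $g$ has the same $1$-jet as a genuine Euclidean metric on an affine chart. Since the reflection/tangency constraints are encoded in ``Berger tuples'' whose validity depends only on the $1$-jet of the metric (Proposition~\ref{bergeq}), Berger's Euclidean results (Theorem~\ref{thh}, Proposition~\ref{pmcd}) apply pointwise with no new computation. Some such reduction (or an actually executed ambient computation) is indispensable; as written, your Step~2 is a research plan, not a proof.

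A secondary gap is the closing identification of $U$ and $V$ with the confocal quadric through $A$ tangent to $AB$: matching the tangent direction and the second fundamental form pins down only the $2$-jet of a $C^2$ germ, not the germ itself, so ``coincides with $U$ near $A$'' does not follow. The paper's route is the reverse of yours and avoids this: it first proves (Theorem~\ref{th21}) that $U$ and $V$ lie on one and the same quadric, using the quadratic circumscribed cones $K_y$ for all $y\in S$ together with Berger's projective-duality lemma (Lemma~\ref{l2}), before knowing anything about $S$; it then deduces that $S$ is a confocal quadric from the $T_yS$-symmetry of those cones via orthogonal polarity and pseudo-symmetries (Lemma~\ref{prsym}, Lemma~\ref{l3}), rather than by invoking the Jacobi caustic theorem and a jet-matching argument.
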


\begin{remark} In the case, when $\Sigma=\rr^d$, Theorem \ref{berg} was proved 
by Marcel Berger \cite{berger}. 
\end{remark}

\section{Caustics of hypersurfaces in space forms. Proof of Theorem \ref{berg}}

The proof  of Theorem \ref{berg} for space forms essentially follows 
 Berger's proof for the Euclidean case given in \cite{berger}. In Subsection 2.1 we first prove that 
 the hypersurfaces $U$ and $V$ are pieces of the same quadric denoted by 
 $U$. Then in Subsection 2.2 we show that $S$ is a quadric confocal to $U$, using the 
 fact that it is an integral hypersurface of a  finite-valued hyperplane 
 distribution: the field of symmetry hyperplanes in $T_x\Sigma$, $x\in\Sigma$,    
 for the geodesic cones $K_x$ circumscribed about the quadric $U$ with vertex at $x$. 

\subsection{The hypersurfaces $U$ and $V$ and circumscribed cones} 
\begin{theorem} \label{th21} In the conditions of Theorem \ref{berg} the hypersurfaces 
$U$ and $V$ are pieces of one and the same quadric.
\end{theorem}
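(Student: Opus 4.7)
The plan is to follow Berger's approach from the Euclidean case and adapt each step to the space form $\Sigma$, with the circumscribed geodesic cones to $U$ and $V$ from points of $S$ serving as the central objects. For each $x \in S$ near $B$ I would define the \emph{circumscribed geodesic cone} $K_x^U \subset T_x\Sigma$ consisting of all unit vectors along geodesics through $x$ tangent to $U$, and analogously $K_x^V$. The reflection hypothesis of Theorem \ref{berg} is equivalent to the family identity $K_x^V = R_x(K_x^U)$, where $R_x\colon T_x\Sigma \to T_x\Sigma$ denotes the orthogonal reflection across the tangent hyperplane $T_xS$. This replaces the geodesic-reflection problem by the variational behavior of a family of cones parametrized by $x \in S$.

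Next I would pass to the projective (Klein--Beltrami) model of $\Sigma$ provided by the ambient $\rr^{d+1}$ realization from the introduction, in which geodesics become straight lines and tangency of a line to a $C^2$ hypersurface is a purely projective condition. In that model the projectivization of each cone $K_x^U$ sits as a smooth codimension-one subvariety of $\rp^{d-1} \cong \mathbb{P}(T_x\Sigma)$; nondegeneracy of the second fundamental form of $U$ guarantees this projectivization is smooth, and its projective isomorphism type encodes the local geometry of $U$ as seen from $x$.

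The decisive step is the Berger dimension count, valid only for $d \geq 3$: the envelope of a generic family $\{R_x(K_x^U)\}_{x \in S}$ of reflected cones has codimension at least two in $\Sigma$, so the requirement that this envelope actually contain the hypersurface $V$ is highly restrictive. I would carry out the envelope computation uniformly across the three space forms in the ambient $\rr^{d+1}$ realization; the hypersurface-caustic condition should reduce to a rank-type polynomial identity on second fundamental form data, forcing the projectivization of each $K_x^U$ to be a quadric in $\rp^{d-1}$, and hence $U$ itself to be a piece of a quadric in $\Sigma$. The orientation-reversing symmetry of the reflection (running a $V$-tangent geodesic backwards through $S$ produces a $U$-tangent geodesic) yields the same conclusion for $V$, and matching the two quadrics via the infinitesimal identity $K_B^V = R_B(K_B^U)$ together with the common tangent geodesic $AB\cup BC$ pins them down as pieces of one and the same quadric. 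The main obstacle will be this envelope-rank computation: proving that the hypersurface-caustic condition is algebraically equivalent to the cone being quadric, uniformly in the three ambient forms $G = \mathrm{diag}(0,1,\dots,1)$, $G = \mathrm{Id}$, $G = \mathrm{diag}(-1,1,\dots,1)$, is where the technical weight lies, although the shared ambient $\rr^{d+1}$ formulation should allow a single calculation to cover all three.
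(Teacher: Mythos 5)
Your skeleton is right in outline --- circumscribed cones from points of $S$, passage to the projective model where geodesics are lines, show the cones are quadratic, conclude that $U$ and $V$ lie in one quadric --- but the two steps that carry all the weight are missing. First, your ``decisive step'' is not an argument: you assert that an envelope of reflected cones has codimension at least two and that the caustic condition ``should reduce to a rank-type polynomial identity'' forcing each cone to be quadratic, but no such dimension count appears in Berger's proof or in the paper, and it is not clear it can be made to work. The actual mechanism is a first-variation computation: for a deformation $x(t)\in S$ with accompanying tangency points $p(t)\in U$, $q(t)\in V$, the reflection condition forces the hyperplane tangent to the circumscribed cone along each generator $\ell$ to be one of at most $d-1$ subspaces $H_j(\ell)$ determined solely by the osculating quadric of $S$ at the vertex; these define a finite-valued hyperplane distribution on $\mathbb{P}(T_y\Sigma)\cong\rp^{d-1}$ whose integral hypersurfaces are quadrics (the projectivized cones tangent to the confocal family of $S$), and the projectivized circumscribed cone is an integral hypersurface, hence a quadric. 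Second, even granting that every circumscribed cone is quadratic, the inference ``and hence $U$ itself is a piece of a quadric'' is a genuinely separate theorem (Berger's Section 3, quoted in the paper as Lemma \ref{l2}), proved by projective duality; you treat it as immediate, and your proposed ``matching'' of the two quadrics at the single point $B$ would not by itself identify $U$ and $V$ globally --- the duality lemma does both at once.

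There is also a gap specific to the space-form generalization, which is where the paper's actual novelty lies. You propose to redo the envelope computation uniformly in the ambient $\rr^{d+1}$ for the three matrices $G$; that is precisely the heavy calculation the paper avoids. The device used instead is that the relevant infinitesimal condition (the ``Berger tuple'') depends only on the $1$-jet of the metric at the cone's vertex, and in the projective model the pushed-forward space-form metric agrees to first order at every point with a Euclidean metric compatible with the affine structure (Proposition \ref{pmet}); hence Berger's Euclidean computation applies verbatim at each vertex with no new calculation. Without either this reduction or an executed computation in the curved cases, your plan does not yet yield a proof.
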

Theorem \ref{th21} is proved below following \cite{berger}. As in loc. cit.,  
we  first prove that for every $y\in S$ 
the geodesic cone with vertex $y$ tangent to $U$ is a quadratic cone tangent to both 
$U$ and $V$ (Lemma \ref{l1}). Afterwards we apply a result from \cite{berger} 
(stated below as Lemma \ref{l2} and proved in loc. cit. via arguments  using projective duality), showing 
that if the latter statement holds, then $U$ and $V$ lie in the same quadric. 

Let $\pi:\Sigma\to\rp^d$ denote the restriction to $\Sigma$ of 
 the tautological projection $\rr^{d+1}\setminus\{0\}\to\rp^d$. It is a diffeomorphism 
 onto the image $\pi(\Sigma)$ in non-spherical cases and a degree two covering over 
 $\rp^d$ in the spherical case.  
 Let $g$ denote the metric on $\pi(\Sigma)$ that is the (well-defined)  
 pushforward of the space form metric. 
 Note that the geodesics (completely geodesic subspaces) 
 for the metric $g$ are the intersections of  projective lines (respectively, projective 
 subspaces) with $\pi(\Sigma)$. In order to reduce the proof to the Euclidean case 
 treated in \cite{berger}, we use the following property of the metric $g$. 
 
 \begin{proposition} \label{pmet} For every point $y\in \pi(\Sigma)$ there exist an  
affine chart $\rr^d\subset\rp^d$ centered at $y$ and a Euclidean metric on $\rr^d$ 
(compatible with the affine structure) that has the same 1-jet at $y$, as the metric $g$. 
\end{proposition}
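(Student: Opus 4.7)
The Euclidean case is essentially trivial: the standard affine chart $\{x_0=1\}\subset\rp^d$ is already isometric to $\rr^d$, and translating so that $y$ becomes the origin gives a chart in which $g$ equals the Euclidean metric exactly, not just to first order. For the spherical and hyperbolic cases, the plan is to use the gnomonic (central) projection of $\Sigma$ onto its affine tangent hyperplane at a lift of $y$, together with the Euclidean metric coming from the restriction of $\langle G\cdot,\cdot\rangle$ to the tangent space of $\Sigma$.

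Concretely, choose any lift $p\in\Sigma$ of $y$ and let $H\subset\rr^{d+1}$ be the affine tangent hyperplane $H=\{x:\langle Gp,x\rangle=\langle Gp,p\rangle\}$. Since $H$ misses the origin, the central projection $\phi([v])=(\langle Gp,p\rangle/\langle Gp,v\rangle)\,v$ provides an affine chart on a neighborhood of $y$ in $\rp^d$, centered at $y$. On $H$, take the constant Euclidean metric coming from the restriction of $\langle G\cdot,\cdot\rangle$ to $T_pH=T_p\Sigma$, which is positive definite in both non-Euclidean cases.

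To verify the 1-jet claim, consider $\Psi=\phi\circ\pi|_\Sigma:\Sigma\to H$, given explicitly by $\Psi(q)=(\langle Gp,p\rangle/\langle Gp,q\rangle)\,q$. A direct computation yields $\Psi(p)=p$ and $d\Psi_p=\operatorname{id}$ (under the natural identification $T_p\Sigma=T_pH$), so the pushforward metric $g$ and the chosen Euclidean metric coincide at $y$. For the first-order part, one tests $g$ against geodesics: a unit-speed geodesic on $\Sigma$ through $p$ with direction $v$ is $\cos(s)p+\sin(s)v$ in the spherical case and $\cosh(s)p+\sinh(s)v$ in the hyperbolic case, and $\Psi$ sends these to $p+\tan(s)v$ and $p+\tanh(s)v$ respectively. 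Since both $\tan(s)$ and $\tanh(s)$ equal $s+O(s^3)$, the parameterized geodesic of $g$ has vanishing acceleration at $y$ in the linear coordinates on $H$; this forces $\Gamma^i_{jk}(y)\,v^j v^k=0$ for every $v$ and hence $\Gamma^i_{jk}(y)=0$. The chosen Euclidean metric has vanishing Christoffel symbols in these same coordinates, so the two metrics have the same 1-jet at $y$.

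The only conceptual step is recognizing that the required chart is nothing but the gnomonic (Klein) projection; once this is in place, the cancellations $\tan''(0)=\tanh''(0)=0$ do all the work, so I foresee no real obstacle beyond bookkeeping.
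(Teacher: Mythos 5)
Your proof is correct, and the chart you construct is the right one: the gnomonic projection onto the affine tangent hyperplane at a lift of $y$ is, up to an isometry of $\Sigma$, exactly the standard chart $\{x_0=1\}$ that the paper uses after normalizing $y=(1:0:\dots:0)$. Where you genuinely diverge is in the verification that the 1-jets agree. The paper first moves $y$ to $(1:0:\dots:0)$ by an isometry of $\Sigma$ (using that $\pi$ conjugates isometries with projective transformations) and then observes that $g$ is invariant under the orthogonal group of the chart fixing the origin; since a rotation-invariant tensor of odd type such as $\partial_k g_{ij}(0)$ must vanish (for instance because $-\mathrm{Id}$ acts on it by $-1$), the first derivatives of $g$ vanish at $y$ and the 1-jets coincide. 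You instead stay at a general point and kill the first derivatives through the geodesic equation: the $g$-geodesics through $y$ become the affinely parametrized curves $s\mapsto p+\tan(s)v$, resp.\ $p+\tanh(s)v$, whose acceleration vanishes at $s=0$, so $\Gamma^i_{jk}(y)=0$ by polarization. Your route is more computational but entirely self-contained (no appeal to transitivity of the isometry group), and it exhibits the geometric mechanism directly. The only step you compress is the passage from ``both metrics agree at $y$ and both have vanishing Christoffel symbols at $y$'' to ``same 1-jet at $y$''; this follows at once from metric compatibility, $\partial_l g_{jk}=\Gamma^m_{lj}g_{mk}+\Gamma^m_{lk}g_{jm}$, but it deserves one explicit line.
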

\begin{proof} Without loss of generality we consider that $y=(1:0:\dots:0)$:  
the isometry group of the space form $\Sigma$ 
acts transitively, and the projection $\pi$ conjugates its action on $\Sigma$ with 
its action on $\rp^d$ by projective transformations (since the isometry group is 
a subgroup in $GL_{d+1}(\rr)$). Thus,  in the standard affine chart $\{ x_0=1\}$ the point 
$y$ is the origin. The metric $g$ 
 is invariant under the orthogonal transformations 
of the affine chart, since the metric of the space form is invariant under the rotations 
around the $x_0$-axis. The metric $g$ on $T_y\rr^d$ coincides with the standard 
Euclidean metric of the chart $\rr^d$, by definition. The two last statements together 
imply that the 1-jets of both metrics at $y=0$ coincide. This proves the proposition.
\end{proof}

\begin{corollary} \label{csfund} Let $U\subset\Sigma$ be a germ of hypersurface with 
non-degenerate 
second fundamental form. Then its projection $\pi(U)$ has non-degenerate second 
fundamental form in any affine chart $\rr^d$ with respect to the standard Euclidean metric. 
\end{corollary}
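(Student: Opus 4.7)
\medskip
\noindent\textbf{Plan.}
The strategy is to transfer non-degeneracy from $U\subset\Sigma$ to $\pi(U)$ in an arbitrary affine chart in two stages: first, work in the special affine chart produced by Proposition~\ref{pmet} where a Euclidean metric matches $g$ to first order at the chosen point; then extend to any other affine chart via projective invariance of the non-degeneracy condition.

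I would begin by noting that $\pi:\Sigma\to\pi(\Sigma)$ is a local isometry from the space form onto $(\pi(\Sigma),g)$ by the very definition of $g$ (a two-sheeted covering in the spherical case, a diffeomorphism otherwise). Therefore $\pi(U)$ has non-degenerate second fundamental form at each point with respect to $g$.

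Next, fixing a point $y\in\pi(U)$, I would apply Proposition~\ref{pmet} to produce an affine chart centered at $y$ together with a Euclidean metric $h$ on it whose $1$-jet at $y$ agrees with that of $g$. Because the second fundamental form of a hypersurface at a point is built from the ambient metric and its Christoffel symbols, it is entirely determined by the $1$-jet of the metric; hence the second fundamental forms of $\pi(U)$ at $y$ with respect to $g$ and with respect to $h$ coincide, and the latter is non-degenerate.

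Finally, to conclude in an arbitrary affine chart $\rr^d\subset\rp^d$ containing $y$ with its standard Euclidean metric, I would invoke projective invariance. The transition between two affine charts of $\rp^d$ is a projective transformation, and for a smooth hypersurface written locally as $x_d=f(x_1,\dots,x_{d-1})$ with $f(0)=0$ and $\nabla f(0)=0$, the non-degeneracy condition $\det\mathrm{Hess}(f)(0)\neq 0$ is preserved by such changes of chart: the osculating quadric of $\pi(U)$ at $y$, whose rank equals the rank of $\mathrm{Hess}(f)(0)$, is mapped to a quadric of the same rank. Combined with the previous step, this gives non-degeneracy of the second fundamental form of $\pi(U)$ at $y$ with respect to the standard Euclidean metric of any affine chart containing $y$.

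The main obstacle I foresee is this last step: verifying that projective changes of affine chart preserve non-degeneracy of the second fundamental form. The essential subtlety is that non-degeneracy is \emph{not} invariant under arbitrary diffeomorphisms of the ambient manifold (a parabola can be straightened to a line by a smooth change of coordinates), so one really does need to exploit that changes of affine chart on $\rp^d$ are projective, and that projective maps preserve the rank of osculating quadrics.
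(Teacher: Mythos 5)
Your proposal is correct and follows essentially the same route as the paper: reduce to the special chart of Proposition~\ref{pmet} (where the second fundamental form, depending only on the $1$-jet of the metric, is unchanged) and then pass to an arbitrary affine chart by projective invariance of non-degeneracy, read off from the rank of the osculating (order-$3$ tangent) quadric. Your explicit remark that non-degeneracy is not preserved by general diffeomorphisms, so that projectivity of the chart change is genuinely needed, is exactly the point the paper's one-line argument relies on.
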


\begin{proof} The corollary follows from Proposition \ref{pmet} and 
invariance of the property of having non-degenerate second fundamental form under 
projective transformations. Indeed, 
each germ of projective hypersurface is tangent to a unique quadric  
with order 3; non-degeneracy of the second fundamental form is equivalent to regularity 
of the quadric, and the space of regular quadrics is invariant under projective transformations. 
\end{proof} 

In what follows in the present subsection 
we identify the hypersurfaces $S$, $U$, $V$ and their points with 
their projection images: for simplicity the projection images $\pi(S)$, $\pi(U)$, $\pi(B)$ etc. 
will be denoted by the symbols $S$, $U$, $B$,...

\begin{lemma} \label{l1} Let $S,U,V\subset\rp^d$ be the tautological projection 
images of the same hypersurfaces in $\Sigma$, as in Theorem \ref{berg} (see the 
above paragraph).  For every $y\in S$ there exists a quadratic cone 
$K_y\subset\rp^d$ 
(i.e., given by the zero locus of a homogeneous quadratic polynomial) with vertex at 
$y$ that is tangent to both hypersurfaces $U$ and $V$. 
\end{lemma}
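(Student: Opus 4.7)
The plan is to follow Berger's construction from \cite{berger}, using Proposition \ref{pmet} to reduce the problem to an affine chart in which the space-form metric is Euclidean to first order at $y$.

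The first step sets up the reflection structure at $y$. By Proposition \ref{pmet}, choose an affine chart $\rr^d\subset\rp^d$ centered at $y$ in which the pushforward metric $g$ has the same 1-jet at $y$ as a Euclidean metric. In this chart, the tangent hyperplane $T_yS$ and the reflection $\sigma_y$ about it are genuine Euclidean objects. The caustic hypothesis of Theorem \ref{berg} translates to: $\sigma_y$ carries each line through $y$ tangent to $U$ to a line through $y$ tangent to $V$, and vice versa. Denote by $T^U_y$ (resp.\ $T^V_y$) the tangent cone from $y$ to $U$ (resp.\ to $V$); by Corollary \ref{csfund} these are bona fide hypersurface cones with vertex $y$, and the reflection relation reads $\sigma_y(T^U_y)=T^V_y$.

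The second step is to construct a quadric cone $K_y$ with vertex $y$ tangent to both $U$ and $V$. The idea is to take $K_y$ to be the osculating quadric cone to $T^U_y$ along a chosen generator (for $y=B$ this is the generator through $A$; for nearby $y$ it is obtained by continuous deformation), built from the 2-jet of $T^U_y$ along that generator in the affine chart. Since the construction of an osculating quadric cone is Euclidean-invariant, applying $\sigma_y$ produces the osculating quadric cone to $T^V_y$ along the reflected generator. The crucial claim is that these two osculating quadric cones coincide, i.e.\ that $K_y$ is $\sigma_y$-invariant. Once this is established, $K_y$ is tangent to $U$ at the foot of the chosen generator by construction, and tangent to $V$ at the reflected foot by combining $\sigma_y$-invariance with $\sigma_y(T^U_y)=T^V_y$.

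The main obstacle, and the heart of Berger's original argument, is proving the $\sigma_y$-invariance of $K_y$. This requires the caustic hypothesis in its full local strength: by varying both the generator direction within $T^U_y$ and the reflecting point slightly along $S$ (thereby varying the reflecting hyperplane), one obtains a $(d-1)$-parameter family of reflection constraints linking the 2-jet of $T^U_y$ to that of $T^V_y$. Combined with the non-degeneracy of the second fundamental forms of $U$ and $V$ (Corollary \ref{csfund}), which guarantees that the apparent contours are smooth $(d-2)$-dimensional submanifolds and that the tangent cones are smooth away from the vertex, these constraints force the osculating quadric cone to be $\sigma_y$-symmetric. Once $K_y$ is produced, it is a quadric cone tangent to both $U$ and $V$, as required.
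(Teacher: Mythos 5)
Your opening reduction is the right one and matches the paper: work in an affine chart centered at $y$ whose Euclidean metric has the same $1$-jet as $g$ (Proposition \ref{pmet}), and observe that the reflection data at $y$ only sees that $1$-jet (this is exactly Proposition \ref{bergeq} in the paper). But from that point on your argument asserts its conclusion rather than proving it. The sentence ``these constraints force the osculating quadric cone to be $\sigma_y$-symmetric'' is the entire content of the lemma, and you give no mechanism for it. Worse, your candidate cone is built from the $2$-jet of the tangent cone $T^U_y$ along one generator, i.e.\ from data attached to $U$ alone; nothing in that construction explains why the resulting quadric cone should be symmetric about $T_yS$, nor why it should be tangent to $U$ along the whole apparent contour rather than to second order along a single generator. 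The symmetry and the quadraticity both come from $S$, not from $U$: that is the point you are missing.

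The paper's route through the Euclidean step is as follows. For a fixed line $\ell$ through $y=B$ transversal to $T_BS$, the first-order reflection constraints (encoded as ``Berger tuples'': curves $x(t)\in S$, $p(t)$, $q(t)$ with $x(t)p(t)$ and $x(t)q(t)$ $\sigma_g$-symmetric) admit only finitely many codimension-one subspaces $H=H_j(\ell)\subset T_BS$, and these are determined by the quadric osculating $S$ at $B$ (Theorem \ref{thh}). Consequently the tangent hyperplane of the circumscribed cone of $U$ along each generator is forced to lie in a finite-valued hyperplane distribution $\mcd$ on $\rp^{d-1}$ whose integral hypersurfaces are all quadrics --- namely the projectivized cones tangent to the quadrics confocal to the osculating quadric $\wt S$ (Proposition \ref{pmcd}). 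The circumscribed cone of $U$ is an integral hypersurface of $\mcd$, hence lies in a quadratic cone $K_B$; this $K_B$ is automatically $\sigma_g$-symmetric because it is circumscribed about a quadric confocal to $\wt S$, and therefore the reflected cone (circumscribed about $V$) lies in the same $K_B$. Your proposal never identifies either pillar --- the finiteness statement for the admissible hyperplanes $H_j(\ell)$ or the integrability statement that the resulting distribution has only quadric cones as integral surfaces --- so the central claim remains unproved.
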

The proof of Lemma \ref{l1} given below follows  \cite[section 2]{berger}. 
 
Let $\sigma_g:(T\rp^d)|_S\to (T\rp^d)|_S$ denote the involution acting as 
 the symmetry of each space $T_y\rp^d$, $y\in S$, with respect to the 
 hyperplane $T_yS$ in the metric $g$. Its action on the 
 projectivized tangent spaces $\rp^{d-1}_y=\mathbb P(T_y\rp^d)$ induces its action on the space of projective lines in $\rp^d\supset S$ intersecting $S$ transversely and so that the intersection point 
 is unique: if $\ell$ intersects $S$ at a point $y$, then 
 $$\hat\ell:=\sigma_g(\ell)$$ 
 is the line through $y$ that is symmetric to $\ell$ in the above sense. 

For every $y\in \Sigma$ set 
$$M_y:=\text{ the space of projective lines through } y \text{ 
that are tangent to } U.$$

 It suffices to prove the statement of Lemma \ref{l1} for an arbitrary point $y\in S$ 
 satisfying the following statements.

 \begin{proposition} \label{pgener} (stated in \cite[pp. 110-111]{berger}) 
 There exists an open and dense subset of points $y\in S\subset\rp^d$ 
 for which there exists an open and dense subset $M^0_y\subset M_y$ of lines $\ell$ 
 satisfying the following statements: 

(i) the line $\ell$  is quadratically tangent to $U$, 
(i.e., $\ell$ is not an asymptotic direction  of the hypersurface $U$ at the tangency point); 

(ii) the line $\hat\ell=\sigma_g(\ell)$ is quadratically tangent to $V$ at a point, 
where the second fundamental form of the hypersurface $V$ is non-degenerate;  

(iii) the lines $\ell$ and $\hat\ell$ are transversal to $T_yS$ and 
 their above tangency points  with $U$ and $V$ are distinct from the point $y$. 
\end{proposition}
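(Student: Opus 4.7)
The plan is to interpret the three genericity conditions as open subsets of the total space
$\mathcal M := \{(y,\ell) : y \in S,\ \ell \in M_y\}$
near the distinguished pair $(B,AB)$, check density in fibers, and then read off the conclusion fiber by fiber.

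I would begin with condition (iii), which is plainly open and which holds at $(B,AB)$ by the hypotheses of Theorem \ref{berg}: the line $AB$ is transversal to $S$ at $B$ and tangent to $U$ at $A\neq B$, and by the reflection law its image $BC = \sigma_g(AB)$ is transversal to $S$ at $B$ and tangent to $V$ at $C\neq B$. Hence (iii) carves out a whole neighborhood $\mathcal N \subset \mathcal M$ of $(B,AB)$. On $\mathcal N$ the reflection $\ell \mapsto \hat\ell = \sigma_g(\ell)$ is a smooth diffeomorphism, and by the hypothesis of Theorem \ref{berg} it carries lines tangent to $U$ bijectively onto lines tangent to $V$, with tangency points varying smoothly.

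Next I would verify density of (i) in each fiber $M_y$. By Corollary \ref{csfund}, $\pi(U)$ has non-degenerate second fundamental form. Hence at each point $p \in U$ the projectivized asymptotic directions in $\mathbb P(T_pU)$ form a proper real-analytic subset of the projectivized tangent space: empty when the form is definite, a smooth quadric hypersurface when it is indefinite. In any smooth local parametrization of tangent lines to $U$ near $AB$, this pulls back to a proper subvariety of $M_y$, whose complement is open and dense. Condition (ii) is then handled by transporting the analogous argument for $V$ through the diffeomorphism $\ell \mapsto \hat\ell$; here one uses that the non-degeneracy of the second fundamental form of $V$ at $C$ persists on a whole neighborhood of $C$, so the relevant point of (ii) about the fundamental form of $V$ is automatically satisfied once tangency points are kept close to $C$.

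The intersection of these three fiber-wise open dense sets inside $\mathcal N$ gives the desired $M_y^0$, valid for every $y$ in an open neighborhood of $B$ on $S$; since $S$ is treated as a germ at $B$, this is the required open dense subset of $S$. There is no real obstacle in this argument; its only mildly delicate point is to ensure that $\sigma_g$ is well-defined and smooth wherever (ii) is formulated, and this is resolved as soon as (iii) has been secured on $\mathcal N$.
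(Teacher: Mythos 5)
There is a genuine gap in your treatment of condition (ii). You write that ``the non-degeneracy of the second fundamental form of $V$ at $C$ persists on a whole neighborhood of $C$, so the relevant point of (ii) about the fundamental form of $V$ is automatically satisfied.'' But non-degeneracy of the second fundamental form of $V$ is \emph{not} among the hypotheses of Theorem \ref{berg}: only $S$ and $U$ are assumed to have non-degenerate second fundamental forms, while $V$ is merely a $C^2$-smooth germ tangent to $BC$ at $C$. The clause about non-degeneracy of the second fundamental form of $V$ in statement (ii) is precisely the content that must be \emph{proved} to hold generically, not imported from the hypotheses; your argument assumes what is to be shown. For the same reason you cannot obtain the density of quadratic tangency with $V$ by ``transporting the analogous argument'' from $U$ through $\ell\mapsto\hat\ell$, since that argument for $U$ rested on the non-degeneracy of $U$'s second fundamental form, which has no counterpart on the $V$ side at this stage.

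The paper closes this gap differently: for $y\notin U\cup V$ and $\ell$ satisfying (i), it considers the cone $K_y$ circumscribed about $U$ and shows that the map sending a generator line of $K_y$ to the tangent hyperplane of $K_y$ along that line is a local immersion into the space of hyperplanes through $y$ (this uses non-degeneracy of the second fundamental form of $U$). This immersivity is carried over to the symmetric cone $\widehat{K_y}=\sigma_g(K_y)$ circumscribed about $V$, because $\sigma_g$ is a diffeomorphism on lines and hyperplanes through $y$. Then, at any point where a generator $\widehat L$ of $\widehat{K_y}$ is quadratically tangent to $V$, the immersivity of the tangent-hyperplane correspondence forces the second fundamental form of $V$ to be non-degenerate there; quadratic tangency itself is arranged for generic $y$ and $L$. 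Your handling of (i) and (iii) and the final fiber-wise intersection of open dense sets are fine and agree with the paper, but (ii) needs the circumscribed-cone argument (or some substitute deriving the non-degeneracy of $V$ from that of $U$) to be a proof.
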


\begin{proof} Statement  (i)   holds for an open and dense subset of lines  $\ell\in M_y$,  
 since the second fundamental form of the hypersurface 
$U$ is non-degenerate (by assumptions and Corollary \ref{csfund}). Statement 
(iii) also holds for a generic $\ell\in M_y$, whenever $y\notin U\cup V$. 
Let us show that 
statement (ii) also holds generically. Indeed, let $y\in S$, $y\notin U\cup V$, 
and let $\ell$ be a line through $y$ satisfying assumption (i). 
Then the cone $K_y$ with vertex $y$ containing $\ell$ and 
circumscribed about $U$ is tangent to $U$ along a 
$n-2$-dimensional submanifold 
$\mct_U\subset U$. The correspondence sending a point $p\in \mct_U$ 
to the projective hyperplane tangent to $U$ at $p$ (i.e., to the projective 
hyperplane tangent to the cone along the line $yp$) 
is a local immersion to the space of hyperplanes through $y$. Or equivalently, 
the correspondence sending a line $L\subset K_y$ through $y$ to the projective 
hyperplane tangent to $K_y$ along $L$ is a local immersion. 
This follows from non-degeneracy of the second fundamental form of the hypersurface 
$U$. This implies the similar statement for the symmetric cone $\widehat{K_{y}}=\sigma_g(K_y)$ 
circumscribed about $V$: 
the correspondence sending each line $\widehat L\subset \widehat{K_{y}}$ through $y$ 
to the hyperplane tangent to $\widehat{K_{y}}$ along the line $\widehat L$ is a local immersion 
 to the space of hyperplanes through $y$. Suppose 
now that a line $\widehat L\subset\widehat{K_{y}}$ through $y$ 
 is quadratically tangent to $V$ at a point $q$. Then 
the above  immersivity statement for the symmetric cone together with 
quadraticity of tangency imply non-degeneracy of the second fundamental form 
of the hypersurface $V$ at the point $q$. It is clear that for a generic choice 
of the point $y$ and a line $L\subset K_y$ through $y$ the corresponding 
symmetric line $\widehat L=\sigma_g(L)$ is 
 quadratically tangent to $V$. This proves the proposition.
 \end{proof}

\begin{convention} \label{conv1} 
 Thus, in the proof of Lemma \ref{l1} 
 without loss of generality we consider that $y=B$, and there exists a line 
 $\ell$ through $B$ that is transversal to $S$ and satisfies statements (i)--(iii) 
 of Proposition \ref{pgener}. 
 Without loss of generality we consider that $A$ is the tangency point of the line 
 $\ell$ with $U$, and $C$ is the tangency point of the symmetric line $\hat\ell=
 \sigma_g(\ell)$ with 
 $V$;  $A,C\neq B$.  Fix an affine chart $\rr^d\subset\rp^d$ centered at $B$ and  
equipped with an Euclidean metric whose 1-jet at $B$ coincides with the 1-jet of the 
metric $g$ (Proposition \ref{pmet}). 
\end{convention}

Consider a smooth deformation $x(t)\in S$ of the point $B$, $x(0)=B$, and a 
smooth deformation $p(t)\in U$  of the point $A$, $p(0)=A$, such that the line 
$\ell(t)=x(t)p(t)$ is tangent to $U$ at $p(t)$. Then the line $\hat\ell(t)=\sigma_g(\ell(t))$ 
symmetric to $\ell(t)$ in the metric $g$ is tangent to the hypersurface $V$ at some 
point $q(t)$, $q(0)=C$, that depends  smoothly on the parameter $t$ (assumptions 
(i)--(iii)).  We will show that the property  
that every deformation $x(t)$ extends to a pair of deformations $p(t)$ and $q(t)$ as 
above implies that the cone $K_y$ tangent to both $U$ and $V$ is quadratic. To do this, consider the projective hyperplanes $\mcu$ and $\mcv$ through $B$ containing the 
lines $\ell(0)=BA$ and $\hat\ell(0)=BC$ respectively: 
$\mcu$ is tangent to $U$ at $A$, and $\mcv$ is tangent to $V$ at $C$. 
\begin{remark} Let $\mcu$ and $\mcv$ be as above. 
The tangent subspaces  $T_B\mcu, T_B\mcv\subset T_B\rp^d$ 
 are $\sigma_g$-symmetric, which 
follows from assumptions. In particular, the latter tangent spaces intersect on a 
codimension 2 subspace $H\subset T_B\rr^d$ lying in $T_BS$: 
\begin{equation}H=T_B\mcu\cap T_BS=T_B\mcv\cap T_BS.\label{newstar}
\end{equation}
For every deformations $x(t)$, $p(t)$, $q(t)$ as above one has 
\begin{equation}u=x'(0)\in T_BS, \  \ v=p'(0)\in T_A\mcu, \ w=q'(0)\in T_C\mcv.
\label{uvw}\end{equation}
This motivates the following definition 
\end{remark}
\begin{definition} Let $S$ be a germ of hypersurface at a point $B\in\rr^d\subset\rp^d$. 
Let $g$ be a positive definite scalar product on the bundle $T\rr^d|_S$. 
Let $\ell$ be a projective line 
through $B$ that is transversal to $T_BS$, 
and let $H\subset T_BS$ be a vector subspace of 
codimension one (codimension two in $T_B\rr^d$). 
Let $A\in\ell$, $C\in\hat\ell=\sigma_g(\ell)$, $A,C\neq B$. 
  Let $\mcu$ and $\mcv$ denote 
the projective hyperplanes through $B$ that are tangent to $H$ and such that 
$\ell\subset\mcu$, $\hat\ell\subset\mcv$. Let 
$$u\in T_BS, \ u\neq0, \ v\in T_A\mcu, \ w\in T_C\mcv.$$
We say that $(\ell, H,  u, A, v, C, w)$ is a {\it Berger tuple,} if there exist germs of 
$C^1$-smooth 
curves of points $x(t)\in S$, $p(t),q(t)\in\rp^d$, $x(0)=B$, $p(0)=A$, $q(0)=C$, 
 such that statements (\ref{uvw}) hold and for every small $t$ the lines 
$x(t)p(t)$ and $x(t)q(t)$ are $\sigma_g$-symmetric.
\end{definition} 

\begin{proposition} \label{bergeq} The property of being a Berger tuple depends only on 
the 1-jet of the metric $g$. Namely, let $S$ be a germ of hypersurface at a point 
$B\in\rr^d\subset\rp^d$. Let  $g_1$ and $g_2$ be two positive definite scalar products 
on the bundle $(T\rr^d)|_S$ that have the same 1-jet at $B$. Then any Berger 
tuple for the metric $g_1$ is a Berger tuple for the metric $g_2$ and vice versa.
\end{proposition}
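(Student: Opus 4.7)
My plan is to fix a Berger tuple $(\ell, H, u, A, v, C, w)$ together with realizing curves $(x(t), p(t), q(t))$ for $g_1$, and to produce realizing curves for $g_2$ by keeping $x$ and $p$ unchanged while modifying only $q$ in a way that preserves the prescribed 1-jet data. The tuple itself does not distinguish $g_1$ from $g_2$: since $g_1|_B = g_2|_B$, the reflected line $\hat\ell = \sigma_g(\ell)$, the hyperplanes $\mcu, \mcv$, and all the constraints on $u, v, w$ are determined by the common $0$-jet of the metrics at $B$.

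The heart of the argument is the following observation. At each $y \in S$, the involution $\sigma_g|_y$ on $T_y\rp^d$ is the $g|_y$-orthogonal reflection in the hyperplane $T_y S$, and is therefore a smooth function of the pair $(T_y S, g|_y)$. Since $y \mapsto T_y S$ is independent of $g$, the 1-jet along $S$ at $B$ of the map $y \mapsto \sigma_g|_y$ depends only on the 1-jet of $y \mapsto g|_y$ along $S$ at $B$, and is the same for $g_1$ and $g_2$. Consequently, for the fixed curves $x(t), p(t)$, the two smooth families of projective lines $L_i(t) := \sigma_{g_i}(x(t)p(t))$, $i = 1, 2$, coincide to first order at $t = 0$ as curves in the Grassmannian of projective lines in $\rp^d$.

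To finish, I would view each family as a ruled surface $\Pi_i = \bigcup_t L_i(t) \subset \rp^d$ equipped with the natural projection $\tau_i : \Pi_i \to (-\varepsilon, \varepsilon)$ sending $z \in L_i(t)$ to $t$. The matching 1-jets give $T_C \Pi_1 = T_C \Pi_2$ and $d\tau_1|_C = d\tau_2|_C$. Now $q$ is a smooth section of $\tau_1$ with $q(0) = C$ and $q'(0) = w$, so $w \in T_C \Pi_1 = T_C \Pi_2$ with $d\tau_2(w) = d\tau_1(w) = 1$; by local triviality of the submersion $\tau_2$ there exists a smooth curve $\tilde q(t) \in L_2(t)$ with $\tilde q(0) = C$ and $\tilde q'(0) = w$. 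The triple $(x(t), p(t), \tilde q(t))$ realizes the Berger tuple for $g_2$, and exchanging the roles of $g_1$ and $g_2$ yields the converse. The main obstacle is the precise bookkeeping of 1-jets of families of projective lines, but this reduces to the elementary fact that the reflection $\sigma_g|_y$ depends smoothly on $(T_y S, g|_y)$ while $T_y S$ is $g$-independent.
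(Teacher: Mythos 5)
Your proof is correct and rests on exactly the same observation as the paper's (one-sentence) proof: since $g_1$ and $g_2$ have the same $1$-jet at $B$, the reflections $\sigma_{g_1}|_y$ and $\sigma_{g_2}|_y$ differ by $o(y-B)$, so the reflected line families agree to first order and the first-order data $(u,A,v,C,w)$ is unaffected. You merely spell out the bookkeeping (adjusting $q$ to a curve $\tilde q(t)\in L_2(t)$ with the same $1$-jet) that the paper leaves implicit; for full robustness at a possibly singular point of the ruled surface one can replace the local-triviality step by noting that the set of achievable velocities $\tilde q'(0)$ is the coset $c'(0)+T_C\hat\ell$ for any $C^1$ section $c(t)\in L(t)$ with $c(0)=C$, which depends only on the $1$-jet of the line family.
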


\begin{proof}
The proposition follows from definition and smoothness of the dependence of the 
reflection $\sigma_g$ on the parameters of the metric $g$: if two metrics have the same 
1-jets at $B$, then the corresponding reflections acting in $T_y\rr^d$ differ by a quantity  
$o(y-B)$. 
\end{proof}

\begin{theorem} \label{thh} \cite[section 2]{berger}. Let $S$ be a 
germ of hypersurface at  a point $B\in \rr^d\subset\rp^d$. Consider the standard 
Euclidean metric on the affine chart $\rr^d$, and let $S$ have non-degenerate 
second fundamental form. Let  $\ell$ be a line through $B$ transversal to $T_BS$. 
Then there exist only a finite 
number $k\leq d-1$ of codimension one vector subspaces 
$H=H_1(\ell),\dots,H_k(\ell)\subset T_BS$ 
such that for every $u\in T_BS$, $u\neq0$ the triple $(\ell,H,u)$ 
extends to a Berger tuple $(\ell, H,  u, A, v, C, w)$ for the Euclidean metric. 
The number $k$ depends 
only on the quadric tangent to $S$ at $B$ with order 3 
(or equivalently, on the second fundamental form of the hypersurface $S$ at $B$). 
The subspaces $H_j(\ell)$ are uniquely 
determined by the line $\ell$ and the above quadric.
 \end{theorem}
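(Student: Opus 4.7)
The plan is to reduce the Berger-tuple condition, a first-order dynamical constraint, to a linear-algebraic condition on $H$ whose coefficients depend only on the $2$-jet of $S$ at $B$ (equivalently, on the quadric $Q_B$ tangent to $S$ at $B$ with order $3$), and then to show that the admissible $H$ form an algebraic subset of $\mathbb{P}(T_BS)^* \cong \rp^{d-2}$ of cardinality at most $d-1$.

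First I would linearize the symmetry condition. Write $R(x)$ for the Euclidean reflection across $T_xS$ and fix $A \in \ell$, $C \in \hat\ell$ with $C - B$ proportional to $R(B)(A-B)$. A Berger tuple with prescribed velocities $(u,v,w)$ exists if and only if there are curves $x(t) \in S$, $p(t), q(t) \in \rp^d$ with these velocities satisfying the line identity $q(t) - x(t) = \lambda(t)\, R(x(t))(p(t) - x(t))$ to first order in $t$. Differentiating at $t = 0$ yields one vector-valued linear equation in $(u,v,w,\lambda'(0))$; its coefficients involve only $R(B)$ and its first derivative along $S$, which are determined by the shape operator of $S$ at $B$.

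Next, with $H$ fixed (hence $\mcu = H + \ell$ and $\mcv = H + \hat\ell$ determined as hyperplanes through $B$), the existence of $(A,v,C,w)$ for each $u$ becomes a linear surjectivity question. Regarding $(A,v,C,w,\lambda'(0))$ as ranging over $\ell \oplus T_A\mcu \oplus \hat\ell \oplus T_C\mcv \oplus \rr$ and viewing the differentiated identity as a linear map into $T_B\rr^d$, the condition that a solution exists for every $u \neq 0$ is the vanishing of an obstruction map built from this linear system. Its entries are polynomial in the projective coordinates of $H$ on $\mathbb{P}(T_BS)^*$, with coefficients depending only on $\ell$ and on $Q_B$, so the admissible $H$ are cut out by a polynomial system determined by $(\ell, Q_B)$.

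Finally, I would diagonalize the shape operator of $S$ at $B$ in an orthonormal basis of $T_BS$ adapted to $\ell$. In such coordinates the obstruction collapses to a single polynomial equation of degree at most $d-1$ on $\mathbb{P}(T_BS)^*$, whose roots are precisely the $H_j(\ell)$; this gives simultaneously the bound $k \leq d-1$ and the uniqueness of $H_j(\ell)$ in terms of $(\ell, Q_B)$. The chief obstacle is producing the degree bound without a brutal expansion: the cleanest route, as in Berger, is the projective-duality reformulation in which admissibility of $H$ translates into a tangency condition of an auxiliary projective line to the dual quadric of $Q_B$, so that the degree $d-1$ appears as the intersection number of a pencil with that quadric.
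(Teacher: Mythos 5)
First, a point of comparison: the paper does not prove Theorem \ref{thh} at all --- it is imported verbatim from \cite[section 2]{berger} and used as a black box --- so there is no in-paper argument to match; your proposal has to be measured against Berger's own computation, whose first half it reproduces correctly. The linearization step is sound: writing the symmetry of the lines $x(t)p(t)$ and $x(t)q(t)$ as an identity involving the reflection $R(x(t))$ and differentiating at $t=0$ turns admissibility of $H$ into the solvability, for every $u\in T_BS$, of a linear system whose coefficients depend only on $\ell$ and the $2$-jet of $S$ at $B$. This already accounts for the last two assertions of the theorem (dependence of $k$ and of the $H_j(\ell)$ only on the osculating quadric of $S$ at $B$).

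The gap is in the counting step, which is where the whole content of the theorem lies. You assert that the obstruction ``collapses to a single polynomial equation of degree at most $d-1$ on $\mathbb{P}(T_BS)^*$, whose roots are precisely the $H_j(\ell)$.'' For $d\geq 3$ the space $\mathbb{P}(T_BS)^*\cong\rp^{d-2}$ has positive dimension, so a single polynomial equation on it cuts out a hypersurface, never a finite set: this step cannot yield finiteness, let alone the bound $k\leq d-1$. The mechanism that actually produces the bound is different in kind: the solvability-for-all-$u$ condition forces the normal line to $H$ inside $T_BS$ to be an eigenvector of a symmetric endomorphism of the $(d-1)$-dimensional space $T_BS$ built from the second fundamental form of $S$ and the direction of $\ell$; equivalently --- and this is exactly what Proposition \ref{pmcd} encodes --- the hyperplane $\Delta_j(\ell)$ spanned by $H_j(\ell)$ and $\ell$ must be tangent to one of the quadrics confocal to the osculating quadric $\wt S$ that touch the line $\ell$. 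The number $d-1$ is the degree in the auxiliary pencil (eigenvalue) parameter, i.e.\ the classical count of confocal quadrics tangent to a generic line, not the degree of an equation on $\mathbb{P}(T_BS)^*$. Your closing appeal to duality is likewise off: a pencil, being a projective line, meets a quadric in $2$ points, so ``the intersection number of a pencil with that quadric'' cannot produce $d-1$. Finally, even once the eigenvalue structure is identified, finiteness (which is part of the statement) still requires ruling out a continuum of admissible $H$ arising from a multiple eigenvalue; this uses the non-degeneracy of the second fundamental form and is not addressed in your sketch.
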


\begin{proposition} \label{pmcd} \cite[p. 114]{berger}. 
In the conditions of Theorem \ref{thh} consider 
the tautological projection  $\pi_B:\rr^d\setminus\{ B\}\to\rp^{d-1}$ to the space of 
lines through $B$. For every line  $\ell$ through $B$ 
the corresponding projection $\pi_B(\ell\setminus\{ B\})\in\rp^{d-1}$ 
will be denoted by $[\ell]$. For every  $\ell$  transversal to $S$   let 
$\Delta_j(\ell)\subset T_B\rp^d=\rr^d$ denote the codimension 1 vector  
subspace spanned by $H_j(\ell)$ and $\ell$. Let 
$\wt\Delta_j([\ell])=\pi_B(\Delta_j(\ell)\setminus\{0\})\subset\rp^{d-1}$ 
 denote its tautological projection, which is a projective hyperplane through $[\ell]$. Set 
 $$\mcd_j([\ell]):=T_{[\ell]}\wt\Delta_j([\ell])\subset T_{[\ell]}\rp^{d-1}.$$
The 
subspaces $\mcd_1([\ell]),\dots,\mcd_k([\ell])\subset T_{[\ell]}\rp^{d-1}$ form a $k$-valued hyperplane distribution $\mcd$ 
on $\rp^{d-1}$, whose all integral surfaces are  quadrics. Moreover, let $\wt S$ be a 
quadric tangent to $S$ at $B$ with order 3: having the same second fundamental form 
at $B$. The $\pi_B$-preimages of the above quadrics in $\rp^{d-1}$ 
(i.e., the integral hypersurfaces) 
are cones with vertex at $B$ that are tangent to the  quadrics confocal to $\wt S$. 
\end{proposition}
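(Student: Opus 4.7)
The plan is to reduce to the Euclidean setting, exhibit explicit candidate integral surfaces as projections of circumscribed cones to confocal quadrics, and finally match these surfaces with the distribution $\mcd$ coming from Theorem \ref{thh}. Propositions \ref{bergeq} and \ref{pmet} first let me replace $g$ by the standard Euclidean metric on the affine chart $\rr^d$ centered at $B$ without altering any Berger tuple based at $B$. Theorem \ref{thh} further says that the subspaces $H_j(\ell)$ depend only on the 2-jet of $S$ at $B$, so throughout the rest of the argument I replace $S$ by its osculating quadric $\wt S$.

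Next I produce the candidate integral surfaces. Let $\{\wt S_\la\}_{\la\in\rr}$ be the Euclidean confocal pencil containing $\wt S$, and for each $\la$ let $K^\la\subset\rr^d$ be the cone with vertex $B$ circumscribed about $\wt S_\la$. Since $\wt S_\la$ is a quadric, $K^\la$ is a quadratic cone (given explicitly by the polar quadric of $B$ with respect to $\wt S_\la$), so its projection $[K^\la]:=\pi_B(K^\la\setminus\{B\})$ is a quadric hypersurface in $\rp^{d-1}$. By the classical Chasles--Jacobi description of confocal families, a generic line $\ell$ through $B$ is tangent to exactly some $k\leq d-1$ quadrics $\wt S_{\la_1(\ell)},\dots,\wt S_{\la_k(\ell)}$; equivalently, a generic $[\ell]\in\rp^{d-1}$ lies on exactly $k$ of the quadrics $[K^\la]$. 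For each such $\la_j$, the tangent hyperplane to the cone $K^{\la_j}$ along the ruling $\ell$ is a hyperplane $\Delta(\ell,\la_j)\subset\rr^d$ through $B$ containing $\ell$, and its $\pi_B$-image is the tangent hyperplane $\mcd(\ell,\la_j)\subset T_{[\ell]}\rp^{d-1}$ to $[K^{\la_j}]$ at $[\ell]$.

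It then remains to identify $\{\Delta(\ell,\la_j)\}_{j=1}^{k}$ with the family $\{\Delta_j(\ell)\}$ from Theorem \ref{thh}. For this I take $U=V=\wt S_{\la_j}$: by the Jacobi caustic theorem recalled in the remark following Definition \ref{defca}, each $\wt S_{\la_j}$ is a caustic of the billiard in $\wt S$, so for every $x(t)\in\wt S$ close to $B$ the two tangent lines from $x(t)$ to $\wt S_{\la_j}$ that are close to $\ell$ are interchanged by $\sigma_g$ at $x(t)$. Linearizing at $t=0$ the condition that $\ell(t)=x(t)p(t)$ and $\hat\ell(t)=x(t)q(t)$ both remain tangent to $\wt S_{\la_j}$ translates into a linear relation on $(x'(0),p'(0),q'(0))$ which is solvable in $(v,w)$ in terms of $u=x'(0)$ precisely when the pair $(u,\ell'(0))$ lies in the tangent hyperplane $\Delta(\ell,\la_j)$ of the cone $K^{\la_j}$. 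Each $\la_j$ therefore produces a codimension-one hyperplane $H=\Delta(\ell,\la_j)\cap T_B\wt S$ satisfying the Berger tuple condition; since Theorem \ref{thh} yields at most $d-1$ such hyperplanes, this list is exhaustive. Consequently the integral hypersurfaces of $\mcd$ are exactly the quadrics $[K^\la]$, and their $\pi_B$-preimages are the cones from $B$ circumscribed about the quadrics confocal to $\wt S$, as claimed.

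The main obstacle is the linearized reflection-caustic computation in the third paragraph: differentiating at $t=0$ the simultaneous tangency of $\ell(t)$ and its reflection $\hat\ell(t)$ to $\wt S_{\la_j}$, and checking that the rank of the resulting linear system in $(v,w)$ drops precisely on the hyperplane $\Delta(\ell,\la_j)$ of the pair $(u,\ell'(0))$. This is the same calculation Berger performs in \cite[Section 2]{berger} for the Euclidean case; the reductions in the first paragraph ensure that it applies verbatim in the present setting of a space form.
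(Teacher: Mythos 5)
The paper offers no proof of Proposition \ref{pmcd} to compare yours against: it is quoted from Berger \cite[p. 114]{berger} as a known result, exactly as Theorem \ref{thh} and Lemma \ref{l2} are. Your reconstruction follows what is essentially Berger's own route — identify the Berger hyperplanes at a generic $[\ell]$ with the tangent hyperplanes, along the ruling $\ell$, of the cones circumscribed from $B$ about the confocal quadrics tangent to $\ell$, using the Jacobi caustic theorem to manufacture the Berger tuples — and the overall plan is sound, but three points need tightening. First, the ``linearization/rank'' discussion in your third paragraph is an unnecessary detour: once $\wt S_{\la_j}$ is known to be a caustic of $\wt S$, the deformations $p(t),q(t)$ exist outright for \emph{every} curve $x(t)$ (take the tangency point of the deformed tangent line and of its reflection; quadratic, non-asymptotic tangency makes these smooth in $t$), and then $v=p'(0)\in T_A\wt S_{\la_j}=\mcu$ and $w=q'(0)\in T_C\wt S_{\la_j}=\mcv$ hold automatically, the two tangent hyperplanes cutting $T_B\wt S$ in the same $H$ because the circumscribed cone is symmetric in $T_B\wt S$. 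No solvability condition has to be verified. Second, and this is the one genuine logical gap, your exhaustiveness step is circular as written: you let $k$ denote both the number of confocal quadrics tangent to $\ell$ and the number of Berger hyperplanes from Theorem \ref{thh}, which is precisely what must be proved equal. What closes the argument is the quantitative Chasles--Jacobi statement that a generic line is tangent to exactly $d-1$ \emph{real} confocal quadrics whose tangent hyperplanes at the tangency points are pairwise orthogonal, hence the $d-1$ resulting Berger hyperplanes $\Delta(\ell,\la_j)\cap T_B\wt S$ are pairwise distinct; combined with the bound $k\le d-1$ of Theorem \ref{thh} this forces the two lists to coincide. Third, ``all integral surfaces are quadrics'' still needs the one-line observation that the $d-1$ branches of $\mcd$ are separated at a generic point, so a connected integral hypersurface stays tangent to a single branch and is a leaf of the corresponding foliation by the quadrics $[K^\la]$.
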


\begin{proof} {\bf of  Lemma \ref{l1}.}  Let $K$ be the cone with vertex at $y=B$ 
circumscribed about the hypersurface $U$. Let  $A$ be a point of tangency 
of the cone $K$ with $U$. Set $\ell=BA$, $\hat\ell=\sigma_g(\ell)$. 
Let $C$ denote the point of tangency of the line $\hat\ell$ with $V$. (We 
consider that the assumptions of Convention \ref{conv1} hold.)   
Then for every germ of smooth curve $x(t)\subset S$, $x(0)=B$, there exist curves 
$p(t)\subset U$ and $q(t)\subset V$, $p(0)=A$, $q(0)=C$, 
such that the lines $x(t)p(t)$ and $x(t)q(t)$ 
are tangent to $U$ and $V$ at $p(t)$ and $q(t)$ respectively and $\sigma_g$-symmetric. 
Let $\mcu,\mcv\subset\rp^d$ be the previously defined projective hyperplanes  
through $B$ tangent to $U$ and $V$ at $A$ and $C$ respectively, and 
$H=T_B\mcu\cap T_B\mcv\subset T_BS$, see  (\ref{newstar}). 
The tuple $(\ell, H, x'(0), A,  p'(0), C, q'(0))$ is a Berger tuple for the 
metric $g$, by definition. 
Therefore, it is also a Berger tuple for the Euclidean metric as well 
(Proposition \ref{bergeq}). This together with Theorem \ref{thh} implies that 
$H=H_j(\ell)$ for some $j$. 
The  cone $K$ is tangent along the line $\ell$ to the hyperplane generated by $\ell$ 
and $H=H_j$, by definition. Therefore, the tautological projection 
$\wt K=\pi_B(K\setminus\{ B\})\subset\rp^{d-1}$   is tangent at $[\ell]=\pi_B(\ell)$ to the  
corresponding hyperplane $\mcd_j([\ell])$  from Proposition \ref{pmcd}. Finally, 
$\wt K$ is an integral hypersurface of the multivalued hyperplane distribution $\mcd$ 
from Proposition \ref{pmcd}, and hence, lies in a quadric $\Gamma(U)$. 
The preimage $\pi_B^{-1}(\Gamma(U))$ is a quadratic 
cone $K_B$ with vertex $B$ that contains $K$ and 
is $\sigma_g$-symmetric, being a cone tangent to a quadric confocal to $\wt S$ 
(see Proposition \ref{pmcd}). 
Similarly, the punctured cone $\sigma_g(K)\setminus\{ B\}$ tangent to $V$ is projected to 
a quadric $\Gamma(V)$, and $\sigma_g(K)$ lies in a quadratic cone. The latter 
quadratic cone coincides with $K_B$, by symmetry. 
 This proves Lemma \ref{l1}.
 \end{proof}

\begin{lemma} \label{l2} \cite[section 3]{berger} Let $U$, $V$, $S$ be $C^2$-smooth 
germs of hypersurfaces in $\rp^d$ with non-degenerate second fundamental forms. 
Let  for every $x\in S$ there exist a quadratic cone $K_x$ with vertex at $x$ 
that is tangent to both $U$ and $V$. Then $U$ and $V$ are pieces of one 
and the same quadric. 
\end{lemma}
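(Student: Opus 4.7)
The plan is to apply projective duality and then reduce to a classical rigidity statement from \cite[section 3]{berger}. Let $U^{*}, V^{*} \subset (\rp^d)^{*}$ denote the germs of dual hypersurfaces to $U$ and $V$; both are smooth because their second fundamental forms are non-degenerate. For a point $x \in \rp^d$, let $x^{\vee} \subset (\rp^d)^{*}$ denote the projective hyperplane consisting of all hyperplanes of $\rp^d$ that pass through $x$.

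The first step is to translate the tangency of $K_x$ with $U$ (respectively $V$) into dual terms. A hyperplane $H$ of $\rp^d$ is tangent to $K_x$ precisely when $x \in H$ and $H$ coincides with the tangent hyperplane of $U$ (equivalently, of $V$) at the point where the ruling of $K_x$ lying in $H$ touches $U$ (respectively $V$). Hence, as germs of $(d-2)$-dimensional submanifolds of $x^{\vee} \cong \rp^{d-1}$,
$$K_x^{*} \;=\; U^{*} \cap x^{\vee} \;=\; V^{*} \cap x^{\vee},$$
and the assumption that $K_x$ is a quadratic cone forces this common intersection to be a smooth piece of a quadric.

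The second step is to extract a single projective hypersurface $W \subset (\rp^d)^{*}$ containing both $U^{*}$ and $V^{*}$ and admitting a $(d-1)$-parameter family of quadric hyperplane sections. As $x$ ranges over $S$, the tangency loci $\mct_U(x) \subset U$ and $\mct_V(x) \subset V$ sweep out open subsets of $U$ and $V$; the identity $K_x^{*} = U^{*} \cap x^{\vee} = V^{*} \cap x^{\vee}$ then shows that for each $p \in U$ close to the basepoint one has $T_p U = T_q V$ for some $q \in V$, and conversely. Hence the Gauss map images of $U$ and $V$ locally fit together into a single germ of $C^{2}$-smooth hypersurface $W \subset (\rp^d)^{*}$, and each hyperplane $x^{\vee}$ with $x \in S$ slices $W$ along the quadric $K_x^{*}$.

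The final step, which is the main obstacle, is the classical projective-geometric rigidity theorem: a $C^{2}$-smooth hypersurface of $\rp^d$ with non-degenerate second fundamental form whose sections by a $(d-1)$-parameter family of hyperplanes are quadrics must itself be a piece of a quadric. This is the technical content of \cite[section 3]{berger}, proved via an analysis of third-order projective jets of the hypersurface combined with a further application of duality. Once this rigidity is granted, $W$ is a piece of a quadric, and dualizing back yields a single quadric of $\rp^d$ containing both germs $U$ and $V$. The first two steps amount to a formal dualization dictionary; it is this classification step — that a wealth of quadric hyperplane sections forces the ambient hypersurface itself to be a quadric — that carries the genuine geometric content of Berger's argument, and I would cite it rather than reprove it here.
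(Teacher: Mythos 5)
Note first that the paper offers no proof of this lemma at all: it is imported wholesale from Berger's article (the text only records that it is proved there ``via arguments using projective duality''), so the only question is whether your sketch is sound on its own. It is not: the dualization dictionary in your first two steps is set up incorrectly. The germ $U^*$ lives at the point $(T_AU)^*\in(\rp^d)^*$ and the germ $V^*$ lives at $(T_CV)^*$, and these are in general distinct points (in the intended application $U$ and $V$ are germs of the \emph{same} hypersurface at two distinct points $A\neq C$, whose tangent hyperplanes differ). The tangency of $K_x$ with $U$ near $A$ and with $V$ near $C$ therefore produces two germs of $(d-2)$-dimensional submanifolds, $U^*\cap x^\vee$ near $(T_AU)^*$ and $V^*\cap x^\vee$ near $(T_CV)^*$, which are two \emph{different pieces} of the single quadric $K_x^*\subset x^\vee$; they are not equal to each other. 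Consequently your deduction that $T_pU=T_qV$ for $p\in U$ near $A$ is false, and the object $W$ into which $U^*$ and $V^*$ are supposed to ``fit together as a single germ'' does not exist: if it did, $U$ and $V$ would already share all tangent hyperplanes near their base points, which is essentially the conclusion of the lemma and certainly not a formal consequence of the hypothesis. The final rigidity step is then applied to a nonexistent hypersurface.

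What the hypothesis actually yields is only this: for each $x\in S$ the two germs $U^*\cap x^\vee$ and $V^*\cap x^\vee$ are contained in one and the same $(d-2)$-dimensional quadric $K_x^*$ of the hyperplane $x^\vee$. A correct duality-based route is then: (a) prove that $U^*$ alone is contained in a single quadric hypersurface $\mathcal Q\subset(\rp^d)^*$, using the $(d-1)$-parameter family of quadric sections $U^*\cap x^\vee\subset K_x^*$ --- this is the genuinely hard step, and it is here (applied to $U^*$, not to $W$) that the rigidity result of Berger's section~3 belongs; (b) check that the germ $U^*\cap x^\vee$ determines the quadric of $x^\vee$ containing it uniquely (this uses $d\ge3$ and irreducibility of $K_x^*$, which follows from non-degeneracy of the second fundamental form of $U$), so that $\mathcal Q\cap x^\vee=K_x^*$; (c) conclude that $V^*\cap x^\vee\subset K_x^*\subset\mathcal Q$ for every $x\in S$, and, since these sections sweep out an open piece of $V^*$ as $x$ varies over $S$, that $V^*\subset\mathcal Q$; dualizing back gives one quadric containing both $U$ and $V$. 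As written, your argument collapses exactly at the point where the content of the lemma lies.
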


\begin{proof} {\bf of Theorem \ref{th21}.} For every $y\in S$ close enough to $B$ 
there exists a quadratic cone $K_y$ with vertex at $y$ circumscribed about both 
$U$ and $V$ (Lemma \ref{l1}). Applying this statement to an open and dense 
subset of points $y\in S$ satisfying genericity assumptions from Convention \ref{conv1}  together with Lemma \ref{l2} yield that  $U$ and $V$ are pieces of one and the same 
quadric. Theorem \ref{th21} is proved.
\end{proof}

\subsection{Symmetry hyperplanes of circumscribed cones and confocal quadrics}
Here we prove the following lemma and then deduce  Theorem \ref{berg} from it.
\begin{lemma} \label{l3} 
(A generalization of an anologous statement in \cite[p. 109]{berger}.) 
Let $\Sigma$ be a simply connected space form of dimension at least three. 
Let  $U\subset\Sigma$ be a quadric with non-degenerate second fundamental form. 
For every $y\in\Sigma\setminus U$ let $K_y$ denote  
the geodesic cone circumscribed about the quadric $U$ with the vertex at $y$ 
(i.e., the union of geodesics through $y$ that are tangent to $U$). We identify the cone 
$K_y$ with the cone $\wt K_y\subset T_y\Sigma$ of vectors tangent to the above geodesics via the exponential mapping $\exp:T_y\Sigma\to\Sigma$. 
Let $S\subset\Sigma$ be a germ of hypersurface at a  point $B\notin U$ 
with non-degenerate second 
fundamental form such that for every $y\in S$ the cone $\wt K_y$ is symmetric with 
respect to the hyperplane $T_yS$. Then $S$ is a quadric confocal to $U$.
\end{lemma}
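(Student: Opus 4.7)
The plan is to recognize $S$ as an integral hypersurface of a canonically defined $d$-valued hyperplane distribution $\mathcal H_U$ on $\Sigma\setminus U$, and then to identify the integral hypersurfaces of $\mathcal H_U$ with the quadrics confocal to $U$. At each $y\in\Sigma\setminus U$, the value $\mathcal H_U(y)$ is the set of hyperplanes in $T_y\Sigma$ with respect to which $\wt K_y$ is symmetric; the hypothesis on $S$ says precisely that $T_yS\in\mathcal H_U(y)$ for every $y\in S$.

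First I would check that $\wt K_y$ is a genuine non-degenerate quadratic cone. Via the projection $\pi:\Sigma\to\rp^d$, geodesics go to projective lines, so $\pi(K_y)$ is the projective cone with vertex $\pi(y)$ circumscribed about the quadric $\pi(U)$, which is quadratic; transferring back to $T_y\Sigma$ by the tangent-line identification makes $\wt K_y$ a homogeneous quadratic cone whose form is non-degenerate because the second fundamental form of $U$ is (cf.\ Corollary \ref{csfund}). A non-degenerate quadratic cone in a $d$-dimensional inner product space has exactly $d$ symmetry hyperplanes, namely the $d$ coordinate hyperplanes of an orthonormal basis simultaneously diagonalizing the cone's quadratic form and the inner product. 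Hence $\mathcal H_U$ is indeed a finite ($d$-valued) hyperplane distribution on $\Sigma\setminus U$.

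The crux is the generalized Jacobi theorem: through each generic $y\in\Sigma\setminus U$ there pass exactly $d$ quadrics $S_{\la_1},\dots,S_{\la_d}$ of the confocal pencil $\{S_\la=\Sigma\cap\{\langle Q_\la x,x\rangle=0\}:Q_\la=(Q-\la G)^{-1}\}$; their tangent hyperplanes $T_yS_{\la_j}$ are pairwise orthogonal in the space form metric and coincide with the $d$ elements of $\mathcal H_U(y)$. In Euclidean space this is classical and is the statement Berger invokes on p.~109; in the spherical and hyperbolic cases it follows from the identity $Q_{\la_1}^{-1}-Q_{\la_2}^{-1}=(\la_2-\la_1)G$, which shows that the conormals $Q_{\la_j}y$ at a common point $y$ are orthogonal with respect to $\langle G\cdot,\cdot\rangle$, and that the corresponding directions simultaneously diagonalize the cone's quadratic form together with the induced metric.

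To finish, fix $B\in S$ and let $\la_0$ be the value of $\la$ for which $S_{\la_0}$ is tangent to $S$ at $B$; such $\la_0$ exists because the $d$ confocal quadrics through $B$ realize the $d$ branches of $\mathcal H_U(B)$ and one of them must equal $T_BS$. Near $B$ the distribution $\mathcal H_U$ splits into $d$ smooth single-valued hyperplane fields, and both $S$ and $S_{\la_0}$ are integral hypersurfaces of the same branch through $B$ with non-degenerate second fundamental forms; by the uniqueness of integral hypersurfaces of a smooth hyperplane field through a given point, they coincide in a neighborhood of $B$, which extends globally by analyticity of quadrics and connectedness. The main technical obstacle is establishing the generalized Jacobi theorem in the non-Euclidean cases, which requires a genuine matrix-algebraic computation with $Q_\la=(Q-\la G)^{-1}$ to verify both the $\langle G\cdot,\cdot\rangle$-orthogonality of confocal quadrics and the identification of their tangent hyperplanes with the principal symmetry hyperplanes of $\wt K_y$.
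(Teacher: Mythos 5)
Your overall strategy coincides with the paper's: view $S$ as an integral hypersurface of the multivalued field of symmetry hyperplanes of the circumscribed cones $\wt K_y$, and identify the integral hypersurfaces of that field with the quadrics confocal to $U$ via a Jacobi-type statement. But as written the argument has two genuine gaps, and the second is exactly where the paper has to work hardest. First, the step you yourself flag as ``the main technical obstacle'' is the mathematical core of the lemma and is never carried out. The identity $Q_{\la_1}^{-1}-Q_{\la_2}^{-1}=(\la_2-\la_1)G$ does give $G$-orthogonality of the conormals of two confocal quadrics at a common point, but it does not show that the tangent hyperplanes of the confocal quadrics through $y$ are symmetry hyperplanes of $\wt K_y$ (that is the caustic property, which the paper imports from Veselov), and---more importantly---it does not show that these are \emph{all} the symmetry hyperplanes. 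Without that exact matching you cannot conclude that $T_BS$ is tangent to a confocal quadric. The paper establishes the matching by passing to the projective dual: it encodes a symmetry hyperplane of $\wt K_y$ as a line $L_y\subset W_y$ (via the orthogonal polarity and Lemma \ref{prsym}) and proves (Claim 2 of its proof) that $L_y$ must lie in a kernel of one of the degenerate forms $<(Q-\la_j(y)G)x,x>|_{W_y}$, which is then identified with a conormal of a confocal quadric.

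Second, your count ``a non-degenerate quadratic cone has exactly $d$ symmetry hyperplanes'' fails precisely when the cone's quadratic form has a repeated eigenvalue relative to the metric: then there is a continuum of symmetry hyperplanes, the distribution is not $d$-valued, and the ``$d$ smooth single-valued branches'' picture near $B$ breaks down, so the uniqueness-of-integral-hypersurface argument cannot even start. This is not a negligible degeneracy: the paper's Case 2 is devoted to it, and ruling it out requires a real argument --- if on an open subset of $S$ the symmetry line $L_y$ sat inside a kernel of dimension at least two, then $\la_j(y)$ would be a global eigenvalue of the pencil and the dual hypersurface $S^*$ would lie in a degenerate quadric, contradicting the non-degeneracy of the second fundamental form of $S$. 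Relatedly, the clean statement ``exactly $d$ confocal quadrics through a generic point, with simple eigenvalues'' holds for generic $U$ only; the paper needs an extra limiting argument (its Subcase 1.2) to treat an arbitrary regular quadric $U$ as a limit of generic ones. Both degeneracy issues must be addressed before your proof closes.
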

In the proof of Lemma \ref{l3} we use the following lemma. To state it, let us recall that 
the orthogonal polarity in $\rr^{d+1}$ is the correspondence sending each vector 
subspace to its orthogonal complement with respect to the standard Euclidean scalar 
product. The orthogonal polarity in codimension one, 
which sends codimension one vector subspaces to their 
orthogonal lines, induces a projective duality $\rp^{d*}\to\rp^d$ sending hyperplanes 
to points. It sends each 
hypersurface $S\subset\rp^d$ to its dual $S^*$: the family of  points dual to the 
hyperplanes tangent to $S$.  

\begin{definition} Consider a scalar product $<Gx,x>$ on $\rr^{d+1}$ 
defining a space form. Orthogonality with respect to the latter scalar product will 
be called {\it $G$-orthogonality.} 
Let $V\subset\rr^{d+1}$ be a subspace that is {\it not isotropic:} 
this means that the restriction to $V$ of the scalar product $<Gx,x>$ is a non-degenerate 
quadratic form (or equivalently, that $V$ is not tangent to the light cone 
$\{<Gx,x>=0\}$). The 
{\it pseudo-symmetry} with respect to $V$ is the 
linear involution $I_V:\rr^{d+1}\to\rr^{d+1}$ 
that preserves the 
above scalar product and whose fixed point set coincides with $V$: it acts trivially on 
$V$ and as a central symmetry in the $G$-orthogonal subspace.
\end{definition}

\begin{lemma} \label{prsym} 
 Let $V\subset\rr^{d+1}$ be a non-isotropic vector subspace. 
Let $k<d+1$. 
Consider the action $I_{V,k}:G(k,d+1)\to G(k,d+1)$ of the 
pseudo-symmetry with respect to $V$  on the Grassmannian of $k$-subspaces. The orthogonal polarity $L\mapsto L^\perp$ 
conjugates the actions $I_{V,k}$ and\footnote{Everywhere below the orthogonality sign 
$\perp$ means orthogonality with respect to the standard Euclidean scalar product.} 
  $I_{V^\perp, d+1-k}$. 
\end{lemma}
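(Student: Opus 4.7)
The plan is to reformulate the polarity-conjugation as an identity of linear operators on $\rr^{d+1}$ and then verify it using the defining relation $I_V^T G I_V = G$ that encodes the fact that $I_V$ preserves the scalar product. For any invertible linear $A$ on $\rr^{d+1}$ and any subspace $L$, one has $(A(L))^\perp = (A^T)^{-1}(L^\perp)$; applied to the involution $A = I_V$, for which $(A^T)^{-1} = A^T$, this reduces the desired conjugation identity
\[
(I_V(L))^\perp = I_{V^\perp}(L^\perp), \qquad L \in G(k, d+1),
\]
to showing that $I_V^T$ and $I_{V^\perp}$ induce the same action on the Grassmannian $G(d+1-k, d+1)$. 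Two diagonalizable involutions of $\rr^{d+1}$ give the same action on the Grassmannian if and only if they are equal or differ by the scalar $-1$, so it suffices to establish $I_V^T = -I_{V^\perp}$ as linear operators.

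To prove this operator identity, I would compare the $\pm 1$-eigenspace decompositions of the two sides. From $I_V^T G I_V = G$ one gets $I_V^T = G I_V G^{-1}$ whenever $G$ is invertible, so the $+1$-eigenspace of $I_V^T$ is $G(V)$ and its $-1$-eigenspace is $G(V^{\perp_G})$. Two short computations, using symmetry of $G$ and a dimension count, yield $G(V^{\perp_G}) = V^\perp$ and $(V^\perp)^{\perp_G} = G^{-1}(V)$. Consequently $-I_{V^\perp}$ has $+1$-eigenspace $G^{-1}(V)$ and $-1$-eigenspace $V^\perp$. Matching the two decompositions reduces the desired equality to the single identity $G(V) = G^{-1}(V)$, equivalently $G^2(V) = V$, which is automatic in both the spherical ($G = \mathrm{Id}$) and the hyperbolic ($G = \diag(-1, 1, \dots, 1)$) cases because $G^2 = \mathrm{Id}$ in each.

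The main obstacle is the Euclidean case, where $G = \diag(0, 1, \dots, 1)$ is degenerate and the shortcut $I_V^T = G I_V G^{-1}$ is unavailable. Here I would compute $I_V^T$ directly from the splitting $\rr^{d+1} = V \oplus V^{\perp_G}$ (which holds because $V$ is non-isotropic): writing $I_V = P_V - P_{V^{\perp_G}}$ for the associated projections and transposing gives an involution whose $+1$-eigenspace is $(V^{\perp_G})^\perp$ and whose $-1$-eigenspace is $V^\perp$. The identification with $-I_{V^\perp}$ then reduces to verifying the equality $(V^{\perp_G})^\perp = (V^\perp)^{\perp_G}$, under the implicit non-isotropy of $V^\perp$ needed to define $I_{V^\perp}$ in the first place. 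Once the operator identity $I_V^T = -I_{V^\perp}$ is in hand, the conjugation assertion follows at once, since the two sides then act identically on every subspace.
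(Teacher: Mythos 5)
Your argument is correct in substance and takes a genuinely different route from the paper's. Both proofs start from the same observation that the orthogonal polarity conjugates the action of an involution $A$ on subspaces with the action of $A^{T}=(A^{T})^{-1}$, so everything reduces to identifying $I_V^T$ with $\pm I_{V^\perp}$. The paper does this indirectly: it characterizes $I_V$, up to sign, as the unique nontrivial projective involution fixing pointwise the family $N_k(V)$ of subspaces split along $V\oplus V^{\perp_G}$ (Proposition \ref{psuniq}), checks that polarity carries $N_k(V)$ onto $N_{d+1-k}(V^\perp)$, and invokes uniqueness. You instead compute $I_V^T$ explicitly (writing $I_V=2P_V-\mathrm{Id}$ with $P_V$ the projection onto $V$ along $V^{\perp_G}$, so that $I_V^T=2P_V^T-\mathrm{Id}$ has $+1$-eigenspace $(V^{\perp_G})^\perp$ and $-1$-eigenspace $V^\perp$) and match eigenspaces with those of $-I_{V^\perp}$. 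This is more elementary, pins down the sign, and dispenses with the uniqueness lemma; in the spherical and hyperbolic cases your reduction to $G^2(V)=V$ closes the argument completely.

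The one step you leave open is the crux: the identity $(V^{\perp_G})^\perp=(V^\perp)^{\perp_G}$, which in the degenerate Euclidean case you only ``reduce to verifying.'' That identity is precisely Proposition \ref{proport} of the paper (applied to $W=V^{\perp_G}$), and it is where the only real content enters; the paper proves it uniformly for all three space forms from the single relation $G^3=G$, which would also let you avoid the case split between invertible and degenerate $G$. Beware that it is not a formal triviality: for $G=\diag(0,1,\dots,1)$ and $V=\rr e_1$ one finds $(V^{\perp_G})^\perp=\rr e_1$ while $(V^\perp)^{\perp_G}=\operatorname{span}(e_0,e_1)$, so the identity genuinely requires the non-isotropy of $V^{\perp_G}$ and $V^\perp$ that you flag only as ``implicit'' (and that the paper builds into the hypotheses of Proposition \ref{proport}). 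With that identity supplied, your proof is complete.
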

\begin{proof} The lemma seems to be well-known to specialists. 
In three dimensions it follows from  \cite[formula (15), p.23]{bolotin2}, \cite[formula (3.12), p.140]{kozlov}.  Let us present its proof for completeness of presentation. As it is shown 
below, Lemma \ref{prsym} is implied by the two following propositions.

\begin{proposition} \label{proport} Let $G$ be a real symmetric 
$(d+1)\times(d+1)$-matrix such that $G^3=G$. 
 Let two non-isotropic subspaces $V, W\subset\rr^{d+1}$ of complementary  
dimensions be  $G$-orthogonal. Then their 
{\bf Euclidean} orthogonal complements $V^\perp$ and $W^\perp$ are also 
non-isotropic  and $G$-orthogonal. 
\end{proposition}
\begin{proof} The condition of the proposition implies that the restrictions 
of the linear operator $G$ to $V$ and $W$ have zero kernels and 
\begin{equation} GV=W^\perp, \ GW=V^\perp.\label{pperp}\end{equation}
Thus, to prove $G$-orthogonality of the latter subspaces, it suffices to show that 
$$<G^2v,Gw>=<G^3v,w>=0 \text{ for every } v\in V \text{ and } w\in W.$$
The first equality follows from symmetry of the matrix $G$. The second 
one follows from $G$-orthogonality of the subspaces $V$ and $W$ and 
 the equality $G^3=G$. The subspaces (\ref{pperp}) are non-isotropic, since the 
 restrictions to them of the scalar product $<Gx,x>$ are isomorphic to its restriction 
 to $V$ and $W$ via the operator $G$: 
 for every $v_1,v_2\in V$ one has 
 $<G(Gv_1),Gv_2>=<Gv_1,v_2>$, since $G^3=G$. 
 Proposition 
\ref{proport} is proved.
\end{proof}

\begin{proposition} \label{psuniq} Let $<Gx,x>$ be a scalar product on $\rr^{d+1}$ 
defining a space form.  Let  $k\in\{1,\dots,d\}$,   $V\subset\rr^{d+1}$ 
be a non-isotropic subspace, and let $W\subset\rr^{d+1}$ be its 
non-isotropic\footnote{In the non-Euclidean cases the $G$-orthogonal complenent $W$ 
to a non-isotropic subspace $V$ is automatically non-isotropic. In the Euclidean case, when the matrix $G$ is degenerate, we require that $W$ does not contain its kernel:  
the $x_0$-axis.} 
$G$-orthogonal completent.  
Let $N_k(V)\subset G(k,d+1)$ denote the subset of those vector $k$-subspaces  
in $\rr^{d+1}$ that are direct sums of some  subspaces $\ell_1\subset V$ and 
 $\ell_2\subset W$.
 The pseudo-symmetry $I_V$  acts trivially on $N_k(V)$ 
 and induces a non-trivial projective involution $\rp^d\to\rp^d$.  
 Vice versa, every non-trivial projective involution acting trivially on $N_k(V)$ 
 is the projectivization of the  pseudo-symmetry $I_V$. 
\end{proposition}
\begin{proof} The first statement of the proposition is obvious. Let us prove the 
second one. Let $F:\rr^{d+1}\to\rr^{d+1}$ be a linear transformation 
whose projectivization is a non-trivial involution acting trivially on $N_k(V)$. 
Without loss of generality we consider that $F^2=\pm Id$. 
For every  vector subspace $L\subset V$ of dimension between 1 and 
$k$ the transformation $F$ preserves the collection of the $k$-subspaces in $N_k(V)$ 
containing $L$. Their intersection being equal to $L$, $F$ preserves $L$. 
The same statement holds for $L\subset W$. Therefore, the restriction of the 
transformation $F$ to any of the subspaces $V$ and $W$ is a homothety. 
The coefficients of the homotheties on $V$ and $W$ are equal to $\pm1$, since 
$F^2=Id$ up to sign. The signs of the latter coefficients 
are opposite, since the projectivization of the transformation $F$ is non-trivial. 
Hence, $F=\pm I_V$. This proves the proposition.
\end{proof}

Let us now return to the proof of Lemma \ref{prsym}. The action of a linear automorphism  
$F:\rr^{d+1}\to\rr^{d+1}$ on  all the vector subspaces of all the dimensions 
is conjugated via the orthogonal polarity to the similar action of the inverse 
$(F^*)^{-1}$ to the conjugate 
operator $F^*$ (with respect to the Euclidean scalar product). In the case, when 
$F$ is an involution, so is $F^*=(F^*)^{-1}$. 

{\bf Claim.} {\it The conjugate operator $F=I_V^*$ acts trivially on $N_{d+1-k}(V^\perp)$.}

\begin{proof} The orthogonal polarity sends each 
$k$-subspace $\Pi=\ell_1\oplus\ell_2\in N_k(V)$, $\ell_1\subset V$, 
$\ell_2\subset W$, to the intersection of two subspaces $L_j=L_j(\Pi)=\ell_j^\perp$:  
\begin{equation}
L_1\supset V^\perp, \  L_2\supset W^\perp, \ \Pi^\perp=L_1\cap L_2,\label{l12incl}\end{equation}
$$\dim(\Pi^\perp)=\dim L_1+\dim L_2-(d+1)
=d+1-k.$$
The transformation $F$ fixes $\Pi^\perp$, by construction 
and since the pseudo-symmetry $I_V$ fixes $\Pi$ (Proposition \ref{psuniq}). 
The  intersection $\Pi^\perp$ 
is the direct sum of the subspaces $L_1\cap W^\perp$ and 
$L_2\cap V^\perp$, which follows from  the inclusions (\ref{l12incl}), and hence, 
lies in $N_{d+1-k}(V^\perp)$. Vice versa, 
each point in $N_{d+1-k}(V^\perp)$ can be represented as the intersection 
$\Pi^\perp$ of 
some subspaces $L_1$ and $L_2$ containing $V^\perp$ and $W^\perp$ respectively. 
Therefore, $F$  acts trivially on all of $N_{d+1-k}(V^\perp)$. The claim is proved.
\end{proof}

The operator $F=I_V^*$ is a projectively non-trivial involution, as is $I_V$. 
It coincides with $I_{V^\perp}$ up to sign, by the claim and Proposition 
\ref{psuniq}.  This together with the discussion preceding the claim 
implies the statement of Lemma \ref{prsym}.
\end{proof}

\begin{proof} {\bf of Lemma \ref{l3}.} Consider the tautological projection 
$\pi:\rr^{d+1}\setminus\{0\}\to\rp^d$, the images  
$\pi(S),\pi(U)\subset\rp^d$ and the hypersurfaces in $\rp^d$ projective-dual 
to them with respect to the orthogonal polarity. For simplicity the latter projective-dual 
hypersurfaces will be denoted by $S^*$ and $U^*$ respectively. Let 
$\wt S,\wt U, \wt{S^*}, \wt{U^*}\subset\rr^{d+1}$ denote the complete $\pi$-preimages 
in $\rr^{d+1}$ of the hypersurfaces 
$\pi(S)$, $\pi(U)$, $S^*$ and $U^*$ respectively: the cones in $\rp^{d+1}\setminus\{0\}$ defined by the 
latter hypersurfaces. Recall that $\pi(U)$ and $U^*$ are dual quadrics; thus one can write 
$$U^*=\{<Qx,x>=0\}, \ \ Q \text{ is a real symmetric } (d+1)\times(d+1)-\text{matrix}.$$
For every $y\in S$ let $\mct_y S\subset\rp^d$ denote the projective hyperplane tangent to 
$S$ at $y$. Define the following vector subspaces  in $\rr^{d+1}$:
$$\Pi_y:=\pi^{-1}(\mct_yS)\cup\{0\}\subset\rr^{d+1}, \ L_y:=\Pi_y^\perp,$$
$$V_y:=\text{ the one-dimensional  subspace } \pi^{-1}(\pi(y))\cup\{0\}\subset\Pi_y, \  \ W_y:=V_y^{\perp}.$$
The subspaces $\Pi_y$, 
$L_y$, $V_y$, $W_y$ are non-isotropic, by construction and Proposition \ref{proport}. 

{\bf Claim 1.} {\it The quadric $U^*$ is regular, i.e., the matrix $Q$ is non-degenerate. 
The hyperplane section $\wt{U^*}\cap W_y$ is invariant 
under the pseudo-symmetry with   respect to the one-dimensional 
vector subspace $L_y\subset W_y$}

\begin{proof} The first statement (non-degeneracy) follows 
from non-degeneracy of the second fundamental form of the quadric $U$. The inclusion 
$L_y\subset W_y$ follows from definition. Recall that 
the  cone $\wt K_y$ is symmetric with respect to the hyperplane $T_yS$, i.e.,   
 the  preimage $\pi^{-1}(K_y)$ is pseudo-symmetric with respect to 
$\Pi_y$, by assumption. The latter statement is equivalent to the second statement 
of the claim, by duality and Lemma  
\ref{prsym}. 
\end{proof} 

The restriction to $W_y$ of the scalar product $<Gx,x>$ is non-degenerate 
(non-isotropicity), and there exist $d$ values $\la=\la_1(y),\dots,\la_{d}(y)$ (taken 
with multiplicity, some of them may coincide) such that the restriction to 
$W_y$ of the scalar product $<(Q-\la G)x,x>$ is degenerate. Thus, the $d$-dimensional 
vector subspace $W_y$ is the $G$-orthogonal direct sum of kernels of the 
scalar products $<(Q-\la_j(y)G)x,x>|_{W_y}$. 

{\bf Claim 2.}  {\it For every $y\in S$ the pseudo-symmetry line 
$L_y$ lies in the kernel of some of the 
scalar products $<(Q-\la_j(y)G)x,x>|_{W_y}$.}

\begin{proof} 
The scalar product $<Qx,x>|_{W_y}$ is invariant under the pseudo-symmetry 
with respect to the line $L_y$. Indeed, the latter pseudo-symmetry is an involution 
preserving the zero locus (light cone) $\wt{U^*}\cap W_y=\{<Qx,x>=0\}\cap W_y$ 
(Claim 1), 
and hence, it preserves the above scalar product up to sign. Let us show that 
the sign is also preserved. For an open and dense subset of points $y\in S$ 
one has  $<Qx,x>\neq 0$ on $L_y\setminus\{0\}$: 
equivalently (via duality), the tangent hyperplane $T_yS$ is not tangent to $U$. 
Indeed, the latter statement holds for an open and dense subset of points 
$y\in S$, since $S\cap U=\emptyset$ and a (germ of) hypersurface is uniquely 
defined by the family of its tangent hyperplanes (well-definedness of the dual 
hypersurface). Thus, for the above $y$ the pseudo-symmetry fixes the 
non-zero quadratic form $<Qx,x>|_{L_y}$, 
since the points of the line $L_y$ are fixed. This together with the above discussion 
 implies that the above-mentioned sign, and hence  the scalar 
product $<Qx,x>|_{W_y}$ are preserved for all $y\in S$. 

For every $\la_j(y)$ the  kernel of the form $<(Q-\la_j(y))x,x>|_{W_y}$ 
 is invariant under the above 
pseudo-symmetry, by  invariance of the scalar products $<Qx,x>$ and $<Gx,x>$. 
This is possible only in the case, when the pseudo-symmetry line $L_y$ lies in some 
of the kernels, which form an orthogonal direct sum decomposition of the subspace 
$W_y$. This proves Claim 2.
\end{proof}

\begin{remark} \label{reml} The subspace $W_y$ and hence, the corresponding 
kernels from Claim 2 depend only on $y$ and are well-defined for all $y\in\Sigma$. 
\end{remark}

Due to Claim 2, the following two cases are possible.

Case 1: for an open and dense subset $S_0$ 
of points $y\in S$ the line $L_y$ coincides with 
a one-dimensional kernel corresponding to a simple eigenvalue $\la_j(y)$. Let us show 
that in this case $S$ lies in a quadric confocal to $U$. Indeed then there 
exist a neighborhood 
$Y=Y(B)\subset\Sigma$ of the base point $B$ of the hypersurface $S$ 
and an open and dense subset $Y_0\subset Y$ containing $S_0$ 
such that the correspondence $y\mapsto L_y$ extends to a family of lines depending 
 analytically on $y\in Y_0$. This implies that the corresponding 
hyperplanes $\Pi_y:=L_y^\perp$ also depend analytically on $y$ and thus, induce 
a  field of hyperplanes $T=T(y)=\Pi_y\cap T_y\Sigma$ on $Y_0$. The hypersurface 
$S_0$ is its integral hypersurface.

Subcase 1.1): $U$ is a generic quadric. Then for a generic point $y\in\Sigma$ 
(here "generic" means "outside an algebraic subset") 

- there are exactly  $d$ quadrics through $y$ confocal to 
$U$, and any two of them are orthogonal at $y$;

- the corresponding eigenvalues $\la_j(y)$ are simple and 
the corresponding $d$ kernels in $W_y$ are one-dimensional.

Recall that the tangent hyperplanes at $y$ of the above confocal quadrics are 
symmetry hyperplanes for the cone $K_y$, since $U$ is a caustic for its 
confocal quadrics. Therefore, the orthogonal polarity $\Pi_y\mapsto L_y$ 
 induces a one-to-one correspondence between the above tangent hyperplanes 
 and kernels. 
This implies that for every $y\in Y_0$ the integral hypersurface of the hyperplane field 
$T$ through $y$ is a confocal quadric to $U$. Passing to limit, as $y$ tends to a point 
of the integral hypersurface $S$, we get that $S$ is a confocal quadric as well.

Subcase 1.2): $U$ is a general regular quadric. Then it is a limit of generic quadrics 
$U_n$ in the above sense. For each $U_n$ the integral hypersurfaces of the 
corresponding above hyperplane field $T_n$ are quadrics confocal to $U_n$. 
Passing to limit, as $n\to\infty$, we get the same statement for the hyperplane 
field $T$ associated to $U$. Hence, $S$ is a quadric confocal to $U$.

Case 2: there exists an open subset of points $y\in S$ for which $L_y$ lies in 
at least two-dimensional kernel of the form $<(Q-\la_j(y))x,x>|_{W_y}$ 
corresponding to a multiple eigenvalue $\la_j(y)$. 
In this case the latter kernel contains at least two linearly independent vectors 
$w_1,w_2\in W_y$, and by definition, both of them are orthogonal to 
the hyperplane $W_y$ with 
respect to the scalar product $<(Q-\la G)x,x>$, $\la=\la_j(y)$. Hence, their 
appropriate non-zero 
linear combination   $w=a_1w_1+a_2w_2$ is orthogonal to the whole ambient space 
$\rr^{d+1}$ with respect to the same scalar product. Therefore, 
$w$ lies in the kernel of the same scalar product taken  on all of $\rr^{d+1}$, 
and thus, $\lambda$ is such that the matrix $Q-\la G$ is degenerate: then we'll call 
such a $\la$ a {\it global eigenvalue}. 
The number of global  eigenvalues $\la$ is at most $d+1$, and all of them are independent 
on $y$. Finally, there exist a global eigenvalue $\lambda$ and an open subset $S_0\subset S$ such  that for every $y\in S_0$ one has $<(Q-\lambda)x,x>\equiv0$ on 
$L_y$, since $L_y$ lies in the kernel of the restriction to $W_y$ of the scalar product 
$<(Q-\lambda)x,x>$. 
Thus, for  $y\in S_0$ the projections $p(y)=\pi(L_y\setminus\{0\})\in\rp^d$ lie a degenerate quadric $\Gamma\subset\rp^d$ defined by the equation $<(Q-\lambda)x,x>=0$. The points $p(y)$ form the  dual hypersurface
$S_0^*$, by definition. Hence, $S_0^*$ lies in a degenerate quadric $\Gamma$. 
This contradicts non-degeneracy of the second 
fundamental form of 
the hypersurface $S$. Hence, the case under consideration is impossible. 
Lemma \ref{l3} is proved.
\end{proof}

\begin{proof} {\bf of Theorem \ref{berg}.} The hypersurfaces $U$ and $V$   lie in the same quadric in 
$\Sigma$, which will be now denoted by $U$ (Theorem \ref{th21}). The 
quadric $U$ is a caustic for the hypersurface $S$: for every $y\in S$ 
the cone of geodesics through $y$ that are tangent to $U$ is symmetric with 
respect to the hyperplane tangent to $T_yS$. Therefore, $S$ is a quadric confocal 
to $U$, by Lemma \ref{l3}. 
This proves Theorem \ref{berg}. 
\end{proof}

\section{Commuting billiards and caustics: proof of Theorem \ref{tab2}}

\begin{proposition} \label{prc} Let $\Sigma$ be a space form of constant curvature of 
dimension  $d\geq2$. Let two  nested  strictly convex  $C^2$-smooth closed 
hypersurfaces  $a,b\subset\Sigma$, $a\Subset\Omega_b$ (see the notations 
at the beginning of the paper) 
 be such that the corresponding billiard transformations $\sigma_a$ and 
 $\sigma_b$ commute. Then $a$ is a caustic for the hypersurface $b$. 
 \end{proposition}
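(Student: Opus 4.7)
The plan is to show that the set $T_a$ of oriented geodesics tangent to $a$ is $\sigma_b$-invariant, which by Definition \ref{defca} is exactly the caustic property. The first step is to identify the fixed-point set of $\sigma_a$ on the space of oriented geodesics in $\Sigma$. A geodesic disjoint from $a$ is fixed by definition of $\sigma_a$; a geodesic $\ell$ tangent to $a$ at a point $p$ is also fixed, since the reflection across the tangent hyperplane $T_p a$ preserves $\ell$ as a set (because $\ell \subset T_p a$), and the orientation is pinned down by continuity from either side (from disjoint lines, which are identity-fixed; or from transverse lines, along which the exit point converges to $p$ and the corresponding reflection converges to the identity on $\ell$). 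Conversely, a line meeting $a$ transversely is never fixed as an oriented line: the reflected direction differs from the original (and in the perpendicular case the orientation is reversed). Hence $\mathrm{Fix}(\sigma_a) = D_a \cup T_a$, where $D_a$ denotes the open set of lines disjoint from $a$ and $T_a$ the codimension-one locus of lines tangent to $a$.

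Next, commutativity immediately transports the fixed set: for every $\ell \in \mathrm{Fix}(\sigma_a)$ one has
$$\sigma_a(\sigma_b \ell) = \sigma_b(\sigma_a \ell) = \sigma_b(\ell),$$
so $\sigma_b(\ell) \in \mathrm{Fix}(\sigma_a)$; since $\sigma_b$ is an involution, $\sigma_b(D_a \cup T_a) = D_a \cup T_a$. To separate $T_a$ from $D_a$ under $\sigma_b$ I would use a topological argument. Because $a \Subset \Omega_b$, every line tangent to $a$ crosses $b$ transversely, so $T_a$ is contained in the open set $B^\ast$ of oriented geodesics meeting $b$ transversely, on which $\sigma_b$ restricts to a diffeomorphism. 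Inside $B^\ast$, the subset $D_a \cap B^\ast$ is open while $T_a$ is a codimension-one submanifold, hence has empty interior; therefore $D_a \cap B^\ast$ is precisely the interior of $(D_a \cup T_a) \cap B^\ast$ in $B^\ast$. A homeomorphism preserving a set also preserves its interior, so $\sigma_b(D_a \cap B^\ast) = D_a \cap B^\ast$, and consequently $\sigma_b(T_a) = T_a$, which is the claim.

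The main delicate point is the identification of $\mathrm{Fix}(\sigma_a)$ at tangent lines: one must check that the continuous extension of $\sigma_a$ across the tangency locus really fixes $T_a$ as a subset of \emph{oriented} geodesics, rather than, say, reversing orientations. Once this is in place, the rest of the argument is formal and uses only the involution and homeomorphism properties of $\sigma_a, \sigma_b$ together with the elementary fact that a homeomorphism preserving a set preserves its interior. Note that the proof works in every dimension $d \geq 2$ and in every space form, so no curvature- or dimension-specific input is needed here; Theorem \ref{berg} will come in only at the next stage, when one upgrades the caustic property to identify $a$ and $b$ as confocal quadrics.
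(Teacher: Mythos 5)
Your overall route is the same as the paper's: identify $\mathrm{Fix}(\sigma_a)$ with the closed set $F=D_a\cup T_a$ of geodesics disjoint from or tangent to $a$, transport it by commutativity, and then separate the tangency locus from the open part by an interior/boundary argument. But there is a genuine error at a load-bearing point: with the convention of this paper $\sigma_b$ is \emph{not} an involution on oriented geodesics. By definition $\sigma_b$ reflects a line at its \emph{last} intersection point with $b$ and orients the image inward, so $\sigma_b^2$ reflects again at the next exit point; iterating $\sigma_b$ is the billiard dynamics, which has orbits of all periods. Consequently your one-line upgrade from $\sigma_b(F)\subset F$ to $\sigma_b(F)=F$ is unjustified, and the equality is genuinely needed: your final step uses that a homeomorphism of $B^*$ maps $F\cap B^*$ \emph{onto} itself in order to conclude that it maps the interior onto the interior and hence the complement $T_a$ onto $T_a$; with only the inclusion $\sigma_b(F)\subset F$ you cannot exclude that some tangent line is sent into $D_a$. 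The repair is exactly the paper's device: let $J$ denote orientation reversal; then $J$ and $J\circ\sigma_b$ \emph{are} involutions, both map $F$ into itself ($J$ obviously, and $J\circ\sigma_b$ because $\sigma_b(F)\subset F$ and $J(F)=F$), and an involution mapping a set into itself maps it onto itself; hence $\sigma_b=J\circ(J\circ\sigma_b)$ maps $F$ onto $F$.

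A secondary, minor point: the inference ``$D_a\cap B^*$ is open and $T_a$ has empty interior, therefore $D_a\cap B^*$ is the interior of $(D_a\cup T_a)\cap B^*$'' is not valid as stated (compare $(\rr\setminus\{0\})\cup\{0\}$, whose interior is all of $\rr$). What you actually need, and what is true here by strict convexity of $a$, is that every neighborhood of a line tangent to $a$ contains lines meeting $a$ transversely, which lie outside $F$; then no point of $T_a$ is interior to $F$. With these two repairs your argument coincides with the proof in the paper.
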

 
 \begin{proof} Let $\Pi_a$ denote the open subset of geodesics in $\Sigma$ 
 that are disjoint 
 from the hypersurface $a$. Its boundary $\partial\Pi_a$ consists of those geodesics 
 that are tangent to $a$. 
A geodesic $L$ is fixed by $\sigma_a$, if and only if $L\in\overline\Pi_a$, i.e., $L$ is either 
disjoint from $a$, or tangent to $a$. In this case  
$\sigma_b\sigma_a(L)=\sigma_b(L)=\sigma_a\sigma_b(L)$, and thus,  $\sigma_b(L)$ is a fixed point of the
transformation $\sigma_a$. This implies that $\sigma_b(\overline\Pi_a)\subset\overline\Pi_a$. The subset $\overline\Pi_a$ is invariant under two transformations 
acting on oriented geodesics: the reflection $\sigma_b$ and the transformation $J$ 
of the orientation change. The transformations $J$ and $J\circ\sigma_b$ are involutions. 
Hence, they are homeomorphisms of the whole space of oriented geodesics in $\Sigma$. 
Their restrictions to the common invariant subset $\overline\Pi_a$ should be  also a homeomorphism: an involution acting on a set is obviously always bijective. Therefore, 
each of them sends the boundary $\partial\Pi_a$ onto itself homeomorphically, and 
the same is true for their composition $\sigma_b=J\circ (J\circ\sigma_b)$: 
$\sigma_b(\partial\Pi_a)=\partial\Pi_a$. 
The latter equality means exactly that $a$ is a caustic for the hypersurface $b$. 
The proposition is proved.
\end{proof}

\begin{proof} {\bf of Theorems \ref{tab2} and \ref{tab}.} Let $a,b\subset\Sigma$ be two nested strictly convex $C^2$-smooth closed hypersurfaces in a space form $\Sigma$ with commuting billiard transformations, $a\Subset\Omega_b$, $\dim\Sigma\geq3$. Then 
$a$ is a caustic for the hypersurface $b$, by Proposition \ref{prc}. This means that 
for every points $B\in b$ and $A\in a$ such that the line $AB$  is tangent to $a$ at $A$ 
the image $\sigma_b(AB)$ of the  line $AB$ (oriented from $A$ to $B$) is a line through 
$B$ tangent to $a$. Recall that $a$ and $b$ are strictly convex, which implies that 
their second fundamental forms are sign-definite and thus, non-degenerate.  Therefore, for every $A$ and $B$ as above the germs at $A$ and $B$ 
of the hypersurfaces $U=a$ and $S=b$ respectively satisfy the conditions of 
Theorem \ref{berg}, with $V$ being the germ of the hypersurface $a$ at its point $D$ 
of tangency with the line $\sigma_b(AB)$. Hence, for every $A$ and $B$ as above 
the germ $(S,B)$ lies in a quadric, and the germs $(U,A)$, $(V,D)$ 
 lie in one and the same quadric confocal to $S$. 
This implies that $b$ is a quadric, and $a$ is a quadric confocal to $b$. Theorems  
\ref{tab2}, and  \ref{tab} are proved.
\end{proof}

\section{A tangential local version of Theorem \ref{tab2}}

\begin{theorem} \label{tabl} Let $d\geq3$. Let 
$(U,A)$, $(S,B)$, $(V,D)$ be germs of $C^2$-smooth hypersurfaces in a $n$-dimensional 
space form $\Sigma$ at points $A$, $B$ and 
$D$. Let $B\neq A,D$,  and let $U$ and $S$ have non-degenerate second fundamental forms. 
For every $Z=U,S,V$ 
consider the action of the reflection $\sigma_Z$ on the 
oriented geodesics that intersect $Z$,  defined as at the beginning of the paper: 
we reflect the geodesic at its last intersection point with the hypersurface $Z$. 
Let  $L_0$ be a geodesic through $B$ transversal to  $S$ 
and quadratically tangent to $U$ at $A$ 
(we orient it from $A$ 
to $B$), and let its image $\sigma_S(L_0)$ be quadratically tangent to $V$ at $D$. 
Let $W$ be a small neighborhood of the 
geodesic $L_0$ in the space of oriented geodesics; 
in particular, each geodesic in $W$ intersects $S$ transversally. 
 Let $\Pi_W\subset W$ denote the subset of
 those geodesics that intersect  $U$. Let for every $L\in\Pi_W$ 
  the image $\sigma_S(L)$ intersect $V$; more precisely, we suppose that 
  the compositions 
$\sigma_S\circ\sigma_U$ and $\sigma_V\circ\sigma_S$ are well-defined on $\Pi_W$. 
Let the latter compositions be identically equal on $\Pi_W$. 
Then $S$ lies in a quadric $b$, and $U$, $V$ lie in one and the same quadric confocal to $b$. 
\end{theorem}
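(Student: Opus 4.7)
The strategy is to reduce Theorem \ref{tabl} directly to Theorem \ref{berg}: I aim to show that the commuting identity $\sigma_S\circ\sigma_U = \sigma_V\circ\sigma_S$ on $\Pi_W$ forces every oriented geodesic close to $L_0$ and tangent to $U$ to be reflected from $S$ to a geodesic tangent to $V$. Once this caustic-like property is in hand, Berger's theorem delivers the conclusion that $S$ lies in a quadric $b$ and $U$, $V$ lie in one and the same quadric confocal to $b$.

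The key initial observation is that every oriented geodesic $L$ tangent to a smooth hypersurface $Z$ is a fixed point of $\sigma_Z$: the unique intersection point with $Z$ is the tangent point, and reflection there preserves the line. Hence for any $L \in \Pi_W$ tangent to $U$, the identity $\sigma_U(L) = L$ combined with the hypothesis yields
\[
\sigma_S(L) \;=\; \sigma_S\circ\sigma_U(L) \;=\; \sigma_V\circ\sigma_S(L),
\]
so $\sigma_S(L)$ is a fixed point of $\sigma_V$ and is therefore either tangent to $V$ or disjoint from $V$. To rule out the disjoint possibility, I would use the transversal part of the hypothesis: for $L \in \Pi_W$ transversal to $U$, we have $\sigma_U(L) \neq L$, so the equality $\sigma_S\circ\sigma_U(L) = \sigma_V\circ\sigma_S(L)$ forces $\sigma_V$ to actually reflect $\sigma_S(L)$ (producing a distinct line), whence $\sigma_S(L)$ must be transversal to $V$. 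Thus $\sigma_S$ sends the open set of geodesics in $W$ transversal to $U$ into the open set of geodesics transversal to $V$, and symmetrically sends the open set of geodesics disjoint from $U$ into the open set of geodesics disjoint from $V$. Because $\sigma_S$ is a local diffeomorphism on $W$, continuity now forces the smooth tangent-to-$U$ hypersurface through $L_0$ (separating these two open regions in the space of oriented geodesics) to map onto the smooth tangent-to-$V$ hypersurface through $\sigma_S(L_0)$; the non-degenerate second fundamental form of $U$ together with the quadratic tangency of $L_0$ to $U$ at $A$ and of $\sigma_S(L_0)$ to $V$ at $D$ ensure that both hypersurfaces are smoothly embedded in the relevant neighborhoods.

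With this caustic-type property secured, all hypotheses of Theorem \ref{berg} are verified: $U$ and $S$ have non-degenerate second fundamental forms by assumption, the geodesic $L_0 = AB$ is tangent to $U$ at $A$ and transversal to $S$ at $B$, $\sigma_S(L_0)$ is tangent to $V$ at $D \neq B$, and every nearby geodesic tangent to $U$ reflects from $S$ to a geodesic tangent to $V$. Applying Theorem \ref{berg} yields at once that $S$ lies in a quadric $b$ and that $U$, $V$ lie in one and the same quadric confocal to $b$. The main technical obstacle is the separation-of-regions step: making rigorous the claim that $\sigma_S$ really does carry the three strata (transversal-to-$U$, tangent-to-$U$, disjoint-from-$U$) bijectively onto the corresponding three strata for $V$, so that the tangent-to-$U$ hypersurface is genuinely forced onto the tangent-to-$V$ hypersurface rather than slipping into the disjoint-from-$V$ region; this amounts to a careful analysis of the local stratified structure of the fixed-point sets of $\sigma_U$ and $\sigma_V$ and the observation that $\sigma_S$, being a local diffeomorphism, preserves it.
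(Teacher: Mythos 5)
Your core argument is exactly the paper's: a geodesic $L$ tangent to $U$ is fixed by $\sigma_U$, so the commuting identity gives $\sigma_S(L)=\sigma_V\circ\sigma_S(L)$, i.e.\ $\sigma_S(L)$ is a fixed point of $\sigma_V$; since a tangent line to $U$ lies in $\Pi_W$ and the hypothesis states that $\sigma_S(L)$ intersects $V$ for every $L\in\Pi_W$ (indeed $\sigma_V\circ\sigma_S$ is assumed well-defined there), being fixed by $\sigma_V$ forces $\sigma_S(L)$ to be tangent to $V$, and Theorem \ref{berg} then concludes. The entire second half of your write-up --- ruling out the ``disjoint from $V$'' alternative via the stratification and separation-of-regions argument, which you single out as the main technical obstacle --- is unnecessary: that case is excluded outright by the stated hypothesis, so the step you left incomplete need not be carried out at all, and with it removed your proof coincides with the paper's.
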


\begin{proof} Every geodesic $L$ tangent to $U$ and close enough to 
 $L_0$ lies in $\Pi_W$. Its image 
$\sigma_U(L)$ coincides with $L$ (by definition), and hence, 
$\sigma_S\circ\sigma_U(L)=\sigma_S(L)=
\sigma_V\circ\sigma_S(L)$. Thus, the geodesic $\sigma_S(L)$, which should 
 intersect $V$ by assumption, is fixed by $\sigma_V$. Hence, it is tangent to $V$ (at the last point 
 of its intersection with $V$). Finally, the germs of hypersurfaces $U$, $S$ and $V$ 
 satisfy the conditions of Theorem \ref{berg}. Therefore, $S$ lies in a quadric $b$, 
 and $U$, $V$ lie in one and the same quadric confocal to $b$, by Theorem \ref{berg}. 
 This proves Theorem \ref{tabl}.
 \end{proof} 
 
 \section{Open problems}
 
 The billiards in space forms are particular cases of the projective billiards introduced 
 in \cite{tabpr}.  The main results of the present paper 
 (Theorem \ref{berg} extending Berger's result on caustics \cite{berger},   
Theorem \ref{tab2} on commuting billiards) are proved for billiards in space forms. 
It would   interesting to extend them to projective billiards. 
 
 {\bf Problem 1} (appeared as a result of our discussion with Sergei Tabachnikov).  Let $S\subset \rr^d$, $d\geq3$ be a germ of hypersurface 
 at a point $B$  equipped 
 with a field $\La$ of  one-dimensional subspaces $\La_y\subset T_y\rr^d$, $y\in S$,  
 transversal to $S$. Consider the family of linear involutions 
 $\sigma_y:T_y\rr^d\to T_y\rr^d$, $y\in S$, that fix each point of the hyperplane $T_yS$ 
 and have $\La_y$ as an eigenline with eigenvalue $-1$. Let there exist 
 two germs of hypersurfaces $U$ and $V$ at points $A,C\neq B$ respectively 
 such that for every $y\in S$ each line through $y$ that is  tangent to $U$ is reflected 
 by $\sigma_y$ to a line tangent to $V$. (Thus defined action of the reflections $\sigma_y$ 
 on oriented lines transversal to $S$ is called the {\bf projective billiard transformation}, 
 and the pair $(S,\La)$ is called a {\bf projective billiard,} 
 see \cite{tabpr}.)    Is it true that then $U$ and $V$ lie in one 
 and the same quadric?
 
 {\bf Problem 2} (S.Tabachnikov). 
 Classify commuting nested pairs of projective billiards in $\rr^d$, $d\geq2$. 

\section{Acknowledgements}
I am grateful to Sergei Tabachnikov for attracting my attention to his Commuting Billiard 
Conjecture. I am grateful to him and to  Etienne Ghys, who informed me about Berger's 
theorem on caustics in higher-dimensional Euclidean spaces, 
for helpful discussions. I am grateful to the referee and to Sergei Tabachnikov for helpful remarks and for suggesting me 
to extend the results to billiards in spaces of constant curvature.


\begin{thebibliography}{}

\bibitem{kavila}  Avila, A.; De Simoi, J.; Kaloshin, V. {\it An integrable deformation of an ellipse of small eccentricity is an ellipse.} 
Ann. of Math. (2) \textbf{184} (2016), no. 2, 527--558.

\bibitem{berger} Berger, M. {\it Seules les quadriques admettent des caustiques.} 
Bull. Soc. Math. France \textbf{123} (1995), 107--116.

\bibitem{bialy}  Bialy, M. {\it Convex billiards and a theorem by E. Hopf.} Math. Z., \textbf{214(1)} (1993), 147--154.

\bibitem{bialy1}  Bialy, M. {\it Hopf rigidity for convex billiards on the hemisphere and hyperbolic plane.} Discrete Contin. Dyn. Syst. \textbf{33} (2013), No. 9, 3903--3913.

\bibitem{bm} Bialy, M.; Mironov, A. {\it Angular billiard and algebraic Birkhoff conjecture.}   Adv. in
Math. \textbf{313} (2017), 102--126. 

\bibitem{bm2} Bialy, M.; Mironov, A. {\it Algebraic Birkhoff conjecture for billiards on Sphere and Hyperbolic plane.} J. Geom. Phys., \textbf{115} (2017), 150--156. 

\bibitem{BM18} M. Bialy, A. Mironov. {\it  A survey on polynomial in momenta integrals for billiard problems.} Philos. Trans. Roy. Soc. A {\bf 376} (2018), no. 2131, 20170418, 19 pp.

\bibitem{bolotin2} Bolotin, S.V. {\it Integrable billiards on surfaces of constant curvature.} Math. Notes \textbf{51} (1992), No. 1--2, 
117--123. 


\bibitem{drag}  Dragovi\'c, V.; Radnovi\'c, M. {\it Integrable billiards and quadrics.}  Russian Math. Surveys \textbf{65} 
(2010), no. 2, 319--379. 

\bibitem{anal} Glutsyuk, A. {\it On 4-reflective complex analytic planar billiards.} J. Geom. 
Analysis, \textbf{27} (2017), 183--238. 

\bibitem{hess} Glutsyuk, A.A. {\it On polynomially integrable billiards on surfaces of constant curvature.} To appear in J. Eur. Math. Soc. 
Preprint: https://arxiv.org/abs/1706.04030

\bibitem{gl} Glutsyuk, A.A. {\it On two-dimensional polynomially integrable billiards on surfaces of constant curvature.} Doklady Mathematics, \textbf{98} (2018), No.1, 382--385. 

\bibitem{kalsor} Kaloshin, V.; Sorrentino, A. {\it On local Birkhoff Conjecture for convex billiards.} Ann. of Math., \textbf{188} (2018),  No. 1, 315--380. 


\bibitem{KS18} V. Kaloshin, A. Sorrentino. {\it On the integrability of Birkhoff billiards.} Philos. Trans. Roy. Soc. A {\bf 376} (2018), no. 2131, 20170419, 16 pp.

\bibitem{kozlov} Kozlov, V.V.; Treshchev, D.V. {\it Billiards. A genetic introduction to the dynamics of systems with impacts.} 
Translated from Russian by J.R.Schulenberger. Translations of Mathematical Monographs, \textbf{89}, Americal Mathematical Society, 
Providence, RI, 1991. 

\bibitem{poritsky}  Poritsky, H.  {\it The  billiard  ball  problem  on  a  table  with  a  convex  boundary -- an  illustrative
dynamical problem.} 
Ann. of Math. (2) \textbf{51} (1950), 446--470. 

\bibitem{st} Staude, O. {\it III, 22, Quadriques}. Encyclop\'edie des 
Sciences Math\'ematiques, Teubner, Gauthier-Villars, J.Gabay, 1904--1992, 1--162. 

\bibitem{tab} Tabachnikov, S., {\it Geometry and Billiards.} Amer. Math. Soc. 2005. 

\bibitem{tabpr} Tabachnikov, S., {\it Introducing projective billiards.} Erg. Th. and Dynam. 
Sys. \textbf{17} (1997), 957--976. 

\bibitem{tabcom} Tabachnikov, S., {\it Commuting dual billiard maps.} Geometriae Dedicata, \textbf{53} (1994), 57--68. 

\bibitem{veselov}  Veselov, A. P. {\it Integrable systems with discrete time, and difference operators.} 
 Funct. Anal. Appl. \textbf{22} (1988), No. 2, 83--93. 
 
\bibitem{veselov2} Veselov, A.P. {\it Confocal surfaces and integrable billiards on the sphere and in the Lobachevsky space.} 
J. Geom. Phys.,  \textbf{7} (1990), Issue 1, 81--107. 


\end{thebibliography}
\end{document}